\documentclass[12pt]{amsart}

\usepackage{amscd}
\usepackage{amssymb,amsmath,amsthm}
\theoremstyle{plain}
\newtheorem{thm}{Theorem}[section]
\newtheorem{lem}[thm]{Lemma}
\newtheorem{cor}[thm]{Corollary}
\newtheorem{defn-lem}[thm]{Definition-Lemma}

\newtheorem{prop}[thm]{Proposition}

\theoremstyle{definition}
\newtheorem{defn}[thm]{Definition}

\newtheorem{rem}[thm]{Remark}




\def\md #1#2#3#4#5 {\left(
                        \begin{matrix}
             #1 & #2 \\
             #3 & #4
                        \end{matrix}
                      \right)- #5}

\def\ma #1#2#3#4 {\left(
                        \begin{matrix}
             #1 & #2 \\
             #3 & #4
                        \end{matrix}
                      \right)}
\def\mu (#1) {\mathcal{M}#1}

\def \mb {\mathbb}

\def \Ker {\operatorname{Ker}}
\def \Im {\operatorname{Im}}
\def\Ind{\operatorname{Ind}}

\def\rank{\operatorname{rank}}

\def\KK{\operatorname{KK}}
\def\Hom{\operatorname{Hom}}
\newcommand{\mc}{\mathcal}

\newcommand{\mf}{\mathfrak}
\begin{document}
\title [Deformation of projections and projection lifting ]
       {Deformation of a projection in the multiplier algebra and projection lifting from the corona algebra }

\begin{abstract}
Let $X$ be a unit interval or a unit circle and let $B$ be a $\sigma_p$-unital, purely infinite, simple $C\sp*$-algebra such that its multiplier algebra $M(B)$ has real rank zero. Then we determine necessary and sufficient conditions for a projection in the corona algebra of $C(X)\otimes B$ to be liftable to a projection in the multiplier algebra. This generalizes a result proved by L. Brown and the author \cite{BL}. The main technical tools are divided into two parts. The first part is borrowed from the author's result, \cite[Theorem 3.3]{Lee}. The second part is a proposition showing that we can produce a sub-projection, with an arbitrary rank which is prescribed as K-theoretical data, of a projection or a co-projection in the multiplier algebra of $C(X)\otimes B$ under a suitable ``infinite rank and co-rank'' condition.
\end{abstract}

\author { Hyun Ho \quad Lee }

\address {Department of Mathematics\\
         University of Ulsan\\
         Ulsan, South Korea 680-749 }
\email{hadamard@ulsan.ac.kr}

\keywords{KK-theory, proper asymtotic unitary equivalence, the essential codimension, projection lifting}

\subjclass[2000]{Primary:46L35 Secondary:55P10}
\date{June 30 2012}
\thanks{Research partially supported by }
\maketitle

\section{Introduction}
Let $X$ be a (finite dimensional) locally compact Hausdorff space and $B$ a $C\sp*$-aglebra. We are concerned with the projection lifting problem from the corona algebra  to the multiplier algebra of a $C\sp*$-algebra of the form $C(X)\otimes B$. An earlier result in this direction is that of W. Calkin \cite{ca}, who showed that a projection in the quotient algebra $B(H)/K$ is liftable to a projection in B(H) where $H$ is a separable infinite dimensional Hilbert space and $K$ is the $C\sp*$-algebra of compact operators on $H$; in our setting this corresponds to the case that $X$ is a one point set and $B$ is the $C\sp*$-algebra of compact operators.
 A generalization of this result was given by L. Brown and the author as follows.
 \begin{thm}\label{T:projectionlifting}
  Let $X$ be $[0,1]$, $(-\infty,\infty)$, $[0,\infty)$, or a unit circle. A projection $\mathbf{f}$ in the corona algebra of $C(X)\otimes K $, represented by finite umber of projection valued functions $(f_0,\cdots, f_n)$ under a suitable partition $\{ x_1,\dots, x_n \}$ of the interior of $X$, is liftable to a projection in the multiplier algebra if and only if there
  exist $l_0,\cdots,l_n$ satisfying the following conditions; suppose $k_i$'s are the essential codimensions of $f_i(x_i)$ and $f_{i-1}(x_i)$ for  $1 \leq i \leq n$.
  \begin{equation}\label{E:eq1}
 l_i-l_{i-1}=-k_i \quad \mbox{for} \quad i>0 \quad \mbox{and}\quad l_0-l_n=-k_0 \quad \mbox{in the circle case,}
 \end{equation}  if for some $x$ in $X_i$, $f_i(x)$ has finite rank, then
             \begin{equation}\label{E:eq2}
         l_i \geq - \rank(f_i(x)),
         \end{equation} if  for some $x$ in $X_i$, $1-f_i(x)$ has finite rank, then
  \begin{equation}\label{E:eq3}
  l_i \leq \rank(1-f_i(x)),
  \end{equation} if either end point of $X_i$ is infinite, then
  \begin{equation}\label{E:eq4}
  l_i=0.
  \end{equation}
  \end{thm}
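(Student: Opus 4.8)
The plan is to recognize each integer $l_i$ as an essential codimension and to argue both implications around that identification. Throughout write $[\,p:q\,]$ for the essential codimension of a pair of projections with $p-q\in K$; I will use that it is additive, $[\,p:q\,]+[\,q:r\,]=[\,p:r\,]$, that it reduces to $\rank p-\rank q$ when both are finite rank, and that $[\,1-p:1-q\,]=-[\,p:q\,]$. For the ``only if'' direction, take a projection $P$ in $M(C(X)\otimes K)$ lifting $\mathbf f$; in the given representation $P$ is a strictly continuous projection-valued function with $P(x)-f_i(x)\in K$ on the interior of $X_i$. Put $l_i:=[\,P(x):f_i(x)\,]$. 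Since $f_i$ moves norm-continuously and $P$ strictly while the two stay compactly close, the index is locally constant, so $l_i$ does not depend on $x\in X_i$. Continuity of $P$ at a partition point $x_i$ together with additivity gives $k_i=[\,f_i(x_i):f_{i-1}(x_i)\,]=[\,f_i(x_i):P(x_i)\,]+[\,P(x_i):f_{i-1}(x_i)\,]=-l_i+l_{i-1}$, which is \eqref{E:eq1}. Positivity of rank yields the inequalities: if $f_i(x)$ has finite rank then $\rank P(x)=l_i+\rank f_i(x)\ge 0$, giving \eqref{E:eq2}, and dually $\rank(1-P(x))=\rank(1-f_i(x))-l_i\ge 0$ gives \eqref{E:eq3}; at an infinite endpoint the compact perturbation $P-f_i$ must tend to $0$ in norm, forcing the index there to vanish, which is \eqref{E:eq4}.

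For the converse I would build a lift by prescribing its values at the partition points and interpolating in between. Given $l_i$ satisfying \eqref{E:eq1}--\eqref{E:eq4}, choose at each $x_i$ a projection $Q_i$ with $[\,Q_i:f_i(x_i)\,]=l_i$; such a $Q_i$ exists precisely because \eqref{E:eq2} and \eqref{E:eq3} provide the room needed to delete dimensions from $f_i(x_i)$ when $l_i<0$ and to adjoin them inside $1-f_i(x_i)$ when $l_i>0$, while \eqref{E:eq1} guarantees that the same $Q_i$ also satisfies $[\,Q_i:f_{i-1}(x_i)\,]=l_{i-1}$, so it is an admissible common value for the two pieces meeting at $x_i$. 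On each closed piece $X_i$ I would then construct a continuous family of projections $P_i(x)$ with $P_i(x)-f_i(x)\in K$, constant index $[\,P_i(x):f_i(x)\,]=l_i$, and boundary values equal to the chosen $Q$'s. As the pieces agree with the shared $Q_i$ at each $x_i$, they glue to a single strictly continuous projection-valued function $P\in M(C(X)\otimes K)$, and $P$ lifts $\mathbf f$ by construction.

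The hard part is the interpolation: producing along $X_i$ a compact perturbation of $f_i$ of constant essential codimension $l_i$ whose two endpoint values are the prescribed finite modifications $Q_i$ and $Q_{i+1}$. Equality of indices does not make two projections equal as operators, so I expect to need a connectedness lemma: any two compact perturbations of $f_i(x_i)$ and $f_i(x_{i+1})$ of the same index $l_i$ are joined by a norm-continuous path of compact perturbations of $f_i$ of that index. Establishing this is where the available ``rank and co-rank room'' is spent --- either $f_i$ has infinite rank and corank, or else the finite bounds \eqref{E:eq2} and \eqref{E:eq3} bind --- and it is the direct analogue of the sub-projection statement flagged in the abstract. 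Two cases remain to be fitted in. At an infinite end \eqref{E:eq4} forces $l_i=0$, so I may simply take $P_i=f_i$ there and the perturbation vanishes at infinity as required by membership in the ideal. In the circle case there is no boundary and the partition is cyclic; the extra relation $l_0-l_n=-k_0$ is exactly the cocycle closing-up condition that makes the round-trip choice of the $Q_i$'s consistent, after which the same gluing finishes the argument.
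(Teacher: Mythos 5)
Your ``only if'' direction is essentially the paper's own argument: set $l_i=[P(x):f_i(x)]$, use local constancy of the essential codimension (Lemma \ref{L:homotopy}), additivity at the partition points for the relation $l_i-l_{i-1}=-k_i$, positivity of rank for the two inequalities, and norm-vanishing of $P-f_i$ at an infinite end for $l_i=0$. That half is correct and complete.

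The converse is where the substance of the theorem lies, and there your proposal has a genuine gap. You reduce everything to an interpolation statement --- given compact perturbations $Q_i$ of $f_i(x_i)$ and $Q_{i+1}$ of $f_i(x_{i+1})$ with the same essential codimension $l_i$, produce a strictly continuous projection-valued $P_i$ on $X_i$ with $P_i-f_i$ norm continuous and these prescribed boundary values --- and then you only say you ``expect to need'' such a connectedness lemma. That lemma \emph{is} the theorem: it bundles together the two nontrivial ingredients on which the actual proof rests, namely (a) the trivial-subfield result (\cite[Corollary A.5]{BL}, Proposition \ref{L:subprojection}), which requires the rank bounds at \emph{every} point of $X_i$ and not only at the partition points where you invoke them to build the $Q_i$'s, and (b) the zero-codimension lifting theorem (\cite[Theorem 3.3]{Lee}, Theorem \ref{T:lifting} here), whose proof in turn rests on the BDF-type statement that vanishing essential codimension is implemented by a unitary of the form $1+{}$compact. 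Neither is routine, and you prove neither.

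For comparison, the paper's route avoids prescribing boundary values altogether: for each $i$ it perturbs $f_i$ \emph{globally} on $X_i$ to $g_i=f_i\pm q$, where $q$ is a norm-continuous projection-valued function of constant rank $|l_i|$ lying under $f_i$ or under $1-f_i$ (this single step is where the rank and co-rank conditions are spent, via the subprojection lemma); the relation $l_i-l_{i-1}=-k_i$ then forces all the new codimensions $[g_i(x_i):g_{i-1}(x_i)]$ to vanish, and one application of the zero-codimension lifting theorem finishes. If you wanted to salvage your interpolation scheme, the natural proof of your connectedness lemma is exactly this two-step reduction; as written, the proposal records the correct skeleton but omits the entire technical core.
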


 We note that our result was obtained from the following interesting proposition on continuous fields of Hilbert spaces. This result says, roughly speaking, that a continuous filed of Hilbert spaces such that each fiber's rank is greater and equal to $n$ has a trivial subfield of rank $m$ for any $m \leq n$. (See also \cite[Proposition 3.2]{DNNP}.)

\begin{prop}(\cite[Corollary A.5]{BL})\label{L:subprojection}
If $X$ is a separable metric space such that whose covering dimension is less than or equal to 1 and
$\mathcal{H}$ is a continuous field of Hilbert spaces over $X$ such
that $\dim (H_x) \geq n $ for every $x \in X$, then $\mathcal{H}$
has a trivial subfield of rank n. Equivalently, if $p$ is a strongly
continuous projection valued function on $X$ such that
$\rank(p(x))\geq n$ for every $x \in X$, then there is a norm
continuous projection valued function $q$ such that $q \leq p$ and
$\rank(q(x))=n$ for every $x \in X$.
\end{prop}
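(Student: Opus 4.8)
The plan is to reduce the statement to the existence of a single nowhere-vanishing continuous section, and then to build the trivial rank-$n$ subfield one dimension at a time. I prefer to work with the projection picture. I would argue by induction on $j=0,1,\dots,n$, constructing norm-continuous projection-valued functions $q_1\le q_2\le\cdots\le q_n\le p$ with $\rank(q_j(x))=j$ for every $x$, starting from $q_0=0$. Given $q_{j-1}$, the function $p-q_{j-1}$ is again a strongly continuous projection-valued function (since $q_{j-1}\le p$ gives $(p-q_{j-1})^2=p-q_{j-1}$), and its image is a continuous field all of whose fibres have dimension $\ge n-(j-1)\ge 1$ as long as $j\le n$. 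A rank-one norm-continuous subprojection $r\le p-q_{j-1}$ then yields $q_j:=q_{j-1}+r$, and $q:=q_n$ is the desired function. Thus everything reduces to the following Key Lemma: a continuous field $\mathcal{H}$ over $X$ all of whose fibres satisfy $\dim(H_x)\ge 1$ admits a nowhere-vanishing continuous section $\xi$; for then $q(x):=\|\xi(x)\|^{-2}\,|\xi(x)\rangle\langle\xi(x)|$ is a norm-continuous rank-one subprojection.

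To prove the Key Lemma I would first produce local unit sections. For each $x_0$ I choose a unit vector in $H_{x_0}$ and extend it to a continuous section of $\mathcal{H}$ near $x_0$, which is possible by the definition of a continuous field; this section has norm close to $1$ on some neighbourhood $U_{x_0}$, so after normalization I obtain a continuous \emph{unit} section $\eta_{x_0}$ on $U_{x_0}$. Since $X$ is separable metric, hence Lindel\"of and paracompact, I would extract a countable, locally finite open refinement $\{U_i\}$ carrying unit sections $\eta_i$, and here I invoke $\dim X\le 1$ to arrange that this cover has order at most $2$, that is, no point lies in three distinct $U_i$. The remaining task is to glue the $\eta_i$ into one global unit section.

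The gluing is the heart of the matter. On the part of $X$ covered by a single $U_i$ one simply uses $\eta_i$; the difficulty is to interpolate across the double overlaps $U_i\cap U_j$ without the section passing through $0$. Because the fibres have dimension $\ge 1$, the unit sphere of each fibre is path-connected, so $\eta_i(x)$ and $\eta_j(x)$ can always be joined by a path of unit vectors; concretely one normalizes the homotopy $(1-t)\eta_i+t\eta_j$ wherever these two vectors are not antipodal and routes around the closed antipodal locus using the room in the fibre. The order-$\le 2$ property guarantees that such an interpolation is never required at a point lying in three or more patches at once, so the interpolations can be carried out independently along the ``edges'' of the nerve. In obstruction-theoretic terms, the primary obstruction to a global section lives in $H^1(X;\pi_0(S(H_x)))$, and path-connectedness of the fibre spheres makes this coefficient system trivial, so the obstruction vanishes; this produces a global continuous unit section and completes the Key Lemma.

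The main obstacle is precisely this continuous patching, and it is exactly where both hypotheses are used: without $\dim X\le 1$ one could not reduce to double overlaps, in accordance with the general-position heuristic that a generic section of a complex rank-$k$ field has zero set of dimension $\dim X-2k$, which is negative (hence empty) only when $\dim X\le 2k-1$, i.e.\ here when $k\ge 1$; and without $\dim(H_x)\ge 1$ the fibre spheres could be empty. The verifications I would defer as routine consequences of the continuous-field axioms are that $p-q_{j-1}$ defines a continuous field with the stated fibre dimensions, and that the rank-one projection onto a continuous unit section depends norm-continuously on the point.
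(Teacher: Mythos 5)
Your overall skeleton is correct, and it is in fact the standard reduction (the paper itself offers no proof of this proposition — it quotes \cite[Corollary A.5]{BL} — and the argument there is organized the same way): the induction producing $q_1\le q_2\le\cdots\le q_n\le p$ by adding norm-continuous rank-one subprojections is fine, and the passage from a nowhere-vanishing continuous section $\xi$ of a subfield to the norm-continuous rank-one projection onto $\mathbb{C}\xi(x)$ is routine, as you say. The gap is entirely inside your Key Lemma, which is the only place where the hypotheses $\dim X\le 1$ and $\dim H_x\ge 1$ have to do real work, and there the argument as written does not go through.

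There are two concrete problems. First, a continuous field of Hilbert spaces is \emph{not} locally trivial: the fibre dimension is only lower semicontinuous and can jump. For instance, $p(x)=P_{e_1}+P_{v(x)}$ for $x\in(0,1]$ and $p(0)=P_{e_1}$, where $v$ is a continuous path of unit vectors orthogonal to $e_1$ tending weakly to $0$, is a strongly continuous projection-valued function whose field has fibre dimension $2$ for $x>0$ and $1$ at $x=0$. Consequently $x\mapsto S(H_x)$ is not a fibration, the groups $\pi_0(S(H_x))$ do not form a local coefficient system, and the sentence ``the primary obstruction lives in $H^1(X;\pi_0(S(H_x)))$'' is not a legitimate invocation of obstruction theory: that machinery requires local triviality (at least the homotopy lifting property), which is exactly what fails for continuous fields. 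Second, the concrete gluing has the same hole in elementary form: pointwise path-connectedness of the fibre spheres gives, for each fixed $x$, \emph{some} path from $\eta_i(x)$ to $\eta_j(x)$, but what is needed is a choice of such interpolations \emph{continuous in $x$} across the overlap. Where $\dim H_x=1$ there is no ``room in the fibre'': there $\eta_j=\varphi\,\eta_i$ with $\varphi$ an $S^1$-valued phase, the linear interpolation vanishes exactly on $\{\varphi=-1\}$, and routing around this set means continuously choosing a direction of rotation (a branch of $\log\varphi$) along a closed set that can be topologically complicated and near which the fibre dimension may repeatedly jump between $1$ and larger values; nothing in your proposal produces this continuous choice. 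This continuous selection is the actual content of the proposition, not a deferable verification: it is what the appendix of \cite{BL} constructs by hand, and it is what the present paper, in the Hilbert-module analogue (Lemma \ref{L:deform}), obtains from the Michael selection theorem (Theorem \ref{T:Selection}) after verifying lower semicontinuity and a uniform local connectivity (equi-$LC^0$) condition — precisely the quantitative strengthening of your pointwise connectivity that your argument is missing.
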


We denote by $M(B)$ the mutilpier algebra of a stable $C\sp*$-algebra $B$, and consider the  projection valued map $\mathbf{p}:X \to M(B)$, which is continuous with respect to the strict topology. Note that the associated fiber $p(x)H_B$ is a Hilbert submodule so that we have no proper notion of rank. But at least we can distinguish ``finiteness'' and ``infiniteness'' of a Hilbert (sub) module using finiteness and infiniteness of the corresponding fiberwise projection in $M(B)$.  We are going to show that:
\begin{lem}\label{L:deform1}
Let $\mathbf{p}$ be a continuous section from $X$ to $M(B)$ with respect to the strict topology on $M(B)$  where  $B$ is a $\sigma_p$-unital, stable $C\sp*$-algebra of real rank zero such that $M(B)$ contains a halving full projection. In addition, when we denote its image on $x\in X$ by $p_x$, assume that $p_x$ is a full, properly infinite projection for each $x$. Then for any $\alpha \in K_0(B)$ there exists a norm continuous section $\mathbf{r}$ from $X$ to $B$ such that  $r_x \leq p_x$ such that $[r]_{K_0(B)}=\alpha$.
\end{lem}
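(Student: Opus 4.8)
The plan is to reduce the assertion to the construction of a single continuous field of partial isometries that embeds a fixed ``model'' projection into the $p_x$, and then to read off $\mathbf r$ as its range. First I would fix a representative: using the proper infiniteness supplied by the halving full projection in $M(B)$ (which makes $1_{M(B)}$, and with it the relevant part of $B$, properly infinite, so that $V(B)$ exhausts $K_0(B)$), I choose a projection $q \in B$ with $[q]_{K_0(B)} = \alpha$. Pointwise the problem is then trivial: since each $p_x$ is full and properly infinite in $M(B)$, a properly infinite full projection absorbs every projection of the ideal it generates, so $q \precsim p_x$, i.e. there is a partial isometry $w$ with $w^\ast w = q$ and $w w^\ast \le p_x$; its range $w w^\ast$ is a candidate for $r_x$ of the correct class. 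What requires work is to choose such a $w = w_x$ so that $x \mapsto w_x$ varies continuously, for then $r_x := w_x w_x^\ast \le p_x$ is the desired section.

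The continuity is driven by one elementary observation: if $b \in B$ is \emph{fixed}, then strict continuity of $\mathbf p$ forces $x \mapsto p_x b$, $x \mapsto b p_x$, and hence $x \mapsto p_x b p_x$, to be \emph{norm} continuous. This is the mechanism that converts strict continuity of $\mathbf p$ into norm continuity of anything built from $\mathbf p$ together with a fixed element of $B$, and it is why the target lands in $B$ and is norm continuous (note $w_x = w_x q \in B$). The local model is compression: given a projection $b = r_{x_0} \in B$ with $b \le p_{x_0}$ and $[b] = \alpha$, the estimate $\|p_x b p_x - b\| \to 0$ as $x \to x_0$ (valid because $b \in B$) shows that $r_x := \chi_{(1/2,\infty)}(p_x b p_x)$ is, for $x$ near $x_0$, a projection with $r_x \le p_x$, norm continuous in $x$, equal to $b$ at $x_0$, and of class $\alpha$ since it is norm close to $b$. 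On an interval this already globalises by continuation along a fine partition: on $[t_i,t_{i+1}]$ I compress the previously constructed endpoint value $r_{t_i} \in B$, which matches at $t_i$ and so glues with no transition term, and the resulting path, being norm continuous, has locally — hence globally — constant class $\alpha$. For the unit circle the continuation need not close up at the seam, and this is exactly the point at which the hypothesis $\dim X \le 1$ and the trivialisation technology of \cite[Theorem 3.3]{Lee} are invoked, to produce a strictly continuous field $w_x$ with $w_x^\ast w_x = q$ and $w_x w_x^\ast \le p_x$ coherently over all of $X$, after which $r_x := w_x w_x^\ast$ finishes the argument.

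The main obstacle I anticipate is precisely this global, coherent choice of the $w_x$. The pointwise subequivalence $q \precsim p_x$ and the local compression are routine; the difficulty is to patch the local data over all of $X$ — closing up around the circle and, even on the interval, ensuring the continuation advances in uniformly controlled steps so that finitely many of them cover the compact base without the $K_0$-class drifting. It is here that the proper infiniteness of the fibers (providing the room to rotate sub-projections) and the low dimension of $X$ (killing the patching obstruction, as in Proposition~\ref{L:subprojection}) are indispensable, and where \cite[Theorem 3.3]{Lee} does the decisive work.
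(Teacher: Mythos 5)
Your local analysis is sound: the observation that strict continuity of $\mathbf p$ multiplied against a \emph{fixed} element of $B$ yields norm continuity is exactly the mechanism the paper exploits, and the compression $r_x=\chi_{(1/2,\infty)}(p_x b p_x)$ does produce, near a point where some projection $b\le p_{x_0}$ of class $\alpha$ exists, a norm-continuous subprojection of class $\alpha$. But the globalization --- which you correctly identify as the crux --- is where your argument has a genuine gap, and the tool you invoke to close it is the wrong one. The continuation along a ``fine partition'' does not close: the length of the interval on which the compression of $r_{t_i}$ stays within spectral distance $\tfrac12$ of a projection depends on $r_{t_i}$ itself, which is only produced at step $i$, so there is no uniform step size and the partition cannot be fixed in advance; the resulting open--closed argument then fails at closedness unless you can splice two independently constructed local solutions, and splicing requires joining two subprojections of $p_x$ with equal $K_0$-class by a norm-continuous path of subprojections --- not available for a general stable real rank zero $B$ (the lemma is not restricted to purely infinite simple $B$). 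Moreover \cite[Theorem 3.3]{Lee} is the projection-lifting theorem; it produces no field of partial isometries and plays no role in this lemma. The paper's route avoids continuation entirely: Lemma \ref{L:deform} (from Kucerovsky--Ng \cite{KuNg}, via the Michael selection theorem applied to $F_H:v\mapsto v^*Hv$, whose fibers $F_H^{-1}(p)$ are closed and contractible) yields in one stroke a strictly continuous $u:X\to M(B)$ with $u_x^*u_x=p_x$ and $u_xu_x^*=H$ over \emph{any} compact finite-dimensional $X$ --- no interval/circle dichotomy, no patching --- after which $r_x:=u_x^*qu_x$ for a fixed projection $q\in HBH$ with $[q]_0=\alpha$ finishes the proof by your own norm-continuity observation.

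A secondary error: the existence of the model projection $q$ with $[q]_0=\alpha$ does not follow from ``proper infiniteness of the relevant part of $B$.'' A halving projection in $M(B)$ does not make any projection of $B$ properly infinite; for $B=K$ the identity of $B(H)$ is halving, yet the image of $V(K)$ in $K_0(K)=\mathbb Z$ consists only of the non-negative integers. The paper instead obtains $q$ from the real rank zero hypothesis: $HBH$ is a full hereditary subalgebra of $B$, hence stably isomorphic to $B$ and again of real rank zero, and the strong $K_0$-surjectivity of such algebras supplies a projection $q\in HBH$ with $[q]_0=\alpha$.
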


Then using the above lemma we can generalize Theorem \ref{T:projectionlifting} to the case $B$ is a $\sigma_p$-unital, purely infinite simple $C\sp*$-algebra provided that $K_0(B)$ is an ordered group.
 \begin{thm}[Theorem \ref{T:liftingthm}]\label{T:liftingthm1}
  A projection $\mathbf{f}$ in $\mathcal{C}(C(X)\otimes B)$ represented by
  $(f_0,\cdots, f_n)$ under a suitable partition $\{ x_1,\dots, x_n \}$ of the interior of $X$ is liftable to a projection in $M(C(X)\otimes B)$  where $f_i(x)$'s are full and properly infinite projections for all $i$ and $x\in X$ if and only if there
  exist $l_0,\cdots,l_n$ in $K_0(B)$ satisfying the following conditions; suppose $k_i$'s are  the (generalized) ``essential codimensions'' of $f_i(x_i)$ and $f_{i-1}(x_i)$ for $1 \leq i \leq n$.
  \begin{equation}\label{E:eq1}
 l_i-l_{i-1}=-k_i \quad \mbox{for} \quad i>0 \quad \mbox{and}\quad l_0-l_n=-k_0 \quad \mbox{in the circle case,}
 \end{equation}
   if for some $x$ in $X_i$, $f_i(x)$ belongs to $B$, then
             \begin{equation}\label{E:eq2}
         l_i \geq - [f_i(x)]_0,
         \end{equation}
   if for some $x$ in $X_i$, $1-f_i(x)$ belongs to $B$, then
  \begin{equation}\label{E:eq3}
  l_i \leq [1-f_i(x)]_0,
  \end{equation}
   if either end point of $X_i$ is infinite, then
  \begin{equation}\label{E:eq4}
  l_i=0.
  \end{equation}
  \end{thm}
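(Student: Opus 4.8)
The overall plan is to attach to any lift, on each subinterval $X_i$, the generalized essential codimension of the lift relative to the representative $f_i$, to read off the numbers $l_i$ from this invariant, and conversely to build a lift by adjusting each $f_i$ by a subprojection of prescribed $K$-theory class supplied by Lemma~\ref{L:deform1}. Throughout I use Brown's relative index $[p:q]\in K_0(B)$, defined for projections $p,q\in M(B)$ with $p-q\in B$, together with its standard properties: additivity $[p:r]=[p:q]+[q:r]$, complementation $[1-q:1-p]=[p:q]$, the normalization $[p:q]=[p]_0-[q]_0$ when $p,q\in B$, and the fact that $[p:q]=0$ is exactly the obstruction to connecting $p$ and $q$ by a norm-continuous path of projections all differing from $q$ by an element of $B$ (this last being the content imported from \cite[Theorem 3.3]{Lee}).

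For the forward direction, suppose $\mathbf{f}$ lifts to a projection $\mathbf{P}\in M(C(X)\otimes B)$, regarded as a strictly continuous field $x\mapsto P_x$ of projections in $M(B)$ with $P_x-f_i(x)\in B$ for $x\in X_i$. I would first observe that $x\mapsto[P_x:f_i(x)]$ is locally constant, since the difference $P_x-f_i(x)$ moves norm-continuously inside $B$, and set $l_i:=[P_x:f_i(x)]$. Evaluating the cocycle identity at a partition point $x_i$ gives $[P_{x_i}:f_{i-1}(x_i)]=[P_{x_i}:f_i(x_i)]+[f_i(x_i):f_{i-1}(x_i)]$, that is $l_{i-1}=l_i+k_i$, which is \eqref{E:eq1}. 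If $f_i(x)\in B$ for some $x$ then $P_x=f_i(x)+(P_x-f_i(x))\in B$ is a projection in $B$, so $[P_x]_0\ge 0$ and $l_i=[P_x]_0-[f_i(x)]_0\ge-[f_i(x)]_0$, which is \eqref{E:eq2}; applying this to the complementary data $1-\mathbf{P}$, $1-f_i$ and using complementation yields \eqref{E:eq3}, and strict convergence at an infinite endpoint forces $l_i=0$ as in \eqref{E:eq4}.

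For the converse, assume $l_0,\dots,l_n\in K_0(B)$ satisfy \eqref{E:eq1}--\eqref{E:eq4}. On each closed piece $X_i$ the complement $1-f_i$ is again a field of full, properly infinite projections, to which Lemma~\ref{L:deform1} applies, producing a norm-continuous field $s_i$ of projections in $B$ with $s_i(x)\le 1-f_i(x)$ and $[s_i]_0=l_i$. Set $P^{(i)}:=f_i+s_i$; this is a norm-continuous projection field on $X_i$ with $P^{(i)}_x-f_i(x)=s_i(x)\in B$ and, since $s_i(x)\perp f_i(x)$ lies in $B$, with $[P^{(i)}_x:f_i(x)]=[s_i(x)]_0=l_i$. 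When $f_i(x)$ or $1-f_i(x)$ degenerates into $B$, conditions \eqref{E:eq2}--\eqref{E:eq3} are exactly what guarantees that the class $l_i$ can be realized without exceeding the available rank; under the blanket hypothesis that every $f_i(x)$ is full and properly infinite these cases do not arise and the construction is unobstructed.

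It remains to glue the pieces $P^{(i)}$ into a single strictly continuous field. At a partition point $x_i$ the one-sided values $P^{(i-1)}_{x_i}$ and $P^{(i)}_{x_i}$ satisfy $P^{(i-1)}_{x_i}-P^{(i)}_{x_i}\in B$, and a short computation with the cocycle identity gives
\begin{equation*}
[P^{(i-1)}_{x_i}:P^{(i)}_{x_i}]=l_{i-1}-l_i-k_i=0
\end{equation*}
by \eqref{E:eq1}. Hence, by the homotopy characterization of vanishing essential codimension, the two values are joined by a norm-continuous path of projections differing from $P^{(i)}_{x_i}$ by elements of $B$; inserting these paths in small collars around the $x_i$ and reparametrizing yields a globally strictly continuous projection field $\mathbf{P}$ with $\mathbf{P}|_{X_i}-f_i\in C(X_i)\otimes B$, so $\mathbf{P}$ is a projection in $M(C(X)\otimes B)$ lifting $\mathbf{f}$. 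I expect the gluing to be the main obstacle: one must verify that vanishing relative index really does produce a relative homotopy of projections in this generality---this is where \cite[Theorem 3.3]{Lee} replaces the Hilbert-bundle argument of Proposition~\ref{L:subprojection}---and then carry out the collar insertion so that the resulting field is strictly, not merely fiberwise, continuous while still differing from each $f_i$ by a norm-continuous $B$-valued function.
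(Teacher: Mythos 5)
Your outline coincides with the paper's own proof. The forward direction is exactly the paper's argument: $l_i:=[P_x:f_i(x)]$ is locally constant by norm continuity of $P_x-f_i(x)$, the cocycle identity at the partition points gives \eqref{E:eq1}, and the degenerate/infinite-endpoint estimates give \eqref{E:eq2}--\eqref{E:eq4} (this is Proposition \ref{P:restrictions} plus Lemma \ref{L:unitaryequi}). The converse also follows the paper's strategy: perturb the local representatives by norm-continuous $B$-valued projection fields of prescribed $K_0$-class so that all interface codimensions vanish, then invoke \cite[Theorem 3.3]{Lee}. One minor correction there: \cite[Theorem 3.3]{Lee} (Theorem \ref{T:lifting} in the paper) is not a homotopy characterization of vanishing codimension; it is already the full lifting statement --- once $[g_i(x_i):g_{i-1}(x_i)]=0$ for all $i$, the corona projection lifts. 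So the ``main obstacle'' you flag (paths, collar insertion, strict versus fiberwise continuity) is not yours to resolve; one only needs to observe that your perturbed tuple represents the same corona element, since $P^{(i)}-f_i=s_i$ lies in $C(X_i)\otimes B$, and then cite that theorem, as the paper does.

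The genuine gap is in your one-case construction $P^{(i)}=f_i+s_i$ with $s_i\le 1-f_i$ and $[s_i]_0=l_i$ for \emph{every} $l_i$. Producing $s_i$ requires applying Lemma \ref{L:deform1} to the complementary field $1-f_i$, but the hypothesis of the theorem makes only the $f_i(x)$ full and properly infinite; nothing forces $1-f_i(x)$ to be so, and the lemma is then unavailable. Your claim that the degenerate cases ``do not arise'' is only half right: $f_i(x)\in B$ is indeed incompatible with fullness of $f_i(x)$ (a projection in the ideal $B$ generates an ideal of $M(B)$ contained in $B$), so \eqref{E:eq2} is vacuous here, but $1-f_i(x)\in B$ is perfectly compatible --- take $f_i\equiv 1$, which is full and properly infinite, with $1-f_i\equiv 0$. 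In that situation your construction can only realize $l_i=0$, whereas \eqref{E:eq3} permits any $l_i\le 0$, and a nonzero such $l_i$ can be forced on you by \eqref{E:eq1} from the neighboring intervals. This is precisely why the paper's proof of Theorem \ref{T:liftingthm} splits into cases: when $l_i<0$ it realizes $l_i$ by \emph{subtracting} a subfield $q'\le f_i$ with $[q'(x)]_0=-l_i$, which uses the lemma only for $f_i$ itself, i.e.\ only the stated hypothesis; when $l_i>0$ it adds a subfield of $1-f_i$ as you do, which is legitimate in the paper's Section 3 formulation because there the $f_i(x)$ are assumed \emph{halving}, so that $f_i$ and $1-f_i$ are both fields of full, properly infinite projections. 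To repair your argument, either adopt the halving hypothesis of Theorem \ref{T:liftingthm} (under which your uniform construction is fine, and is in fact a mild simplification of the paper's three cases), or reinstate the case distinction and subtract from $f_i$ when $l_i$ is negative.
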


In addtion, we are going to show that two projections in the corona algebra are equivalent under suitable condtions on associated generalized essential codimensions.

  \section{The essential codimension}\label{S:codimension}

As we observe in Theorem \ref{T:liftingthm1}, the technical tool other than Lemma \ref{L:deform1} is a K-theoretic notion which is called the generalized essential codimension. In fact, this generalizes the notion of classical essential codimension of Brown, Douglas, Fillmore. In Section \ref{S:codimension}, we give a careful treatment of this notion using rudiments of Kasparov's $KK$-theory although it appeared in \cite{Lee} without showing the connection to the classical essential codimension.

Let $E$ be a (right) Hilbert $B$-module. We denote $\mathcal{L}(E,F)$ by the $C\sp{*}$-algebra of adjointable, bounded  operators from $E$ to $F$. The ideal of `compact' operators from $E$ to $F$ is denoted by $\mathcal{K}(E,F)$. When $E=F$, we write $\mathcal{L}(E)$ and $\mathcal{K}(E)$ instead of  $\mathcal{L}(E,E)$ and $\mathcal{K}(E,E)$. Throughout the paper, $A$ is a separable $C\sp{*}$-algebra, and all Hilbert modules are assumed to be countably generated over a separable $C\sp*$-algebra. We use the term representation for a $*$-homomorphism from $A$ to $\mathcal{L}(E)$. We let $H_B$ be the standard Hilbert module over $B$ which is $H\otimes B $ where $H$ is a separable infinite dimensional Hilbert space. We denote $M(B)$ by the multiplier algebra of $B$. It is well-known that $\mathcal{L}(H_B)=M(B\otimes K)$ and $\mathcal{K}(H_B)=B\otimes K$ where $K$ is the $C\sp{*}$-algebra of the compact operators on $H$ \cite{Kas80}.

Let us recall the definition of Kasparov group $\KK(A,B)$. We refer the reader to \cite{Kas81} for the general introduction of the subject. A $\KK$-cycle is a triple $(\phi_0,\phi_1,u)$, where $\phi_i:A \to \mathcal{L}(E_i)$ are representations and $u \in \mathcal{L}(E_0,E_1)$ satisfies that
\begin{itemize}
\item[(i)] $u\phi_0(a)-\phi_1(a)u \in \mathcal{K}(E_0,E_1)$,
\item[(ii)]$\phi_0(a)(u^*u-1)\in \mathcal{K}(E_0)$, $\phi_1(a)(uu^*-1)\in \mathcal{K}(E_1)$.
\end{itemize}
The set of all $KK$-cycles will be denoted by $\mathbb{E}(A,B)$.  A cycle is degenerate if
\[ u\phi_0(a)-\phi_1(a)u=0, \quad\phi_0(a)(u^*u-1)=0,\quad \phi_1(a)(uu^*-1)=0.\]
An operator homotopy through $KK$-cycles is a homotopy $(\phi_0,\phi_1,u_t)$, where the map $t \to u_t$ is norm continuous.
The equivalence relation $\underset{\text{oh}}{\sim}$ is generated by operator homotopy and addition of degenerate cycles up to unitary equivalence.
Then $\KK(A,B)$ is defined as the quotient of $\mathbb{E}(A,B)$ by $\underset{\text{oh}}{\sim}$. When we consider  non-trivially graded $C\sp{*}$-algebras, we define a triple $(E,\phi,F)$, where $\phi:A \to \mathcal{L}(E)$ is a graded representation, and $F\in \mathcal{L}(E)$ is of odd degree such that $F\phi(a)-\phi(a)F$, $(F^2-1)\phi(a)$, and $(F-F^*)\phi(a)$ are all in $\mathcal{K}(E)$ and call it a Kasparov $(A,B)$-module. Other definitions like degenerate cycle and operator homotopy are defined in  similar ways.

In the above, we introduced the Fredholm picture of $\KK$-group. There is an alternative way to describe the element of $KK$-group. The Cuntz picture is described by a pair of representations $\phi,\psi:A \to \mathcal{L}(H_B)=M(B\otimes K)$ such that $\phi(a)-\psi(a)\in \mathcal{K}(H_B)=B\otimes K$. Such a pair is called a Cuntz pair. They form a set denoted by $\mathbb{E}_h(A,B)$. A homotopy of Cuntz pairs consists of a Cuntz pair $(\Phi,\Psi):A \to M(C([0,1])\otimes (B\otimes K))$. The quotient of $\mathbb{E}_h(A,B)$ by homotopy equivalence  is a group $\KK_h(A,B)$ which is isomorphic to $\KK(A,B)$ via the mapping sending $[\phi,\psi]$ to $[\phi,\psi,1]$.

\begin{defn}[the generalized essential codimension]\label{D:BDF}
Given two projections $p,q \in M(B\otimes K)$ such that $p-q \in B\otimes K$, we consider representations $\phi,\psi $ from $\mathbb{C}$ to $M(B\otimes K)$ such that $\phi(1)=p,\psi(1)=q$. Then $(\phi,\psi)$ is a Cuntz pair so that
we define $[p:q]$ as the class $[\phi,\psi]\in \KK_h(\mathbb{C},B) \simeq K(B)$ and call the (generalized) essential codimension of $p$ and $q$.
\end{defn}
\begin{rem}
We recall that BDF's original definition of the essential codimension of $p$ and $q$ in $B(H)$ is given by the Fredholm index of $V^*W$ where $V$ and $W$ are  isometries such that $VV^*=q$ and $W^*W=p$ \cite{BDF}. This looks different with the above definition. But in the case $B=K$ or $\mb{C}$  $[p:q]$ is mapped to $[\phi,\psi,1]$ in $\KK(\mb{C},\mb{C})$. Then the map from $\KK(\mb{C},\mb{C})$ to $\mb{Z}$ sends $[\phi,\psi,1]$ to the Fredholm index of $qp$ viewing $qp$ as the operator from $pH$ to $qH$. Thus it equals to the Fredholm index of $V^*W$.
\end{rem}

The following demonstrate that the generalized essential codimension behaves like the original essential codimension (see \cite[Section 1]{B}).

\begin{lem}\label{L:properties}
$[\,:\,]$ has the following properties.
\begin{enumerate}
\item $[p_1:p_2]=[p_1]_0-[p_2]_0$ if either $p_1$ or $p_2$ belongs to $B$, where $[p_i]_0$ is the $K_0$-class of a projection $p_i$,
 \item $[p_1:p_2]=-[p_2:p_1],$
 \item $[p_1:p_3]=[p_1:p_2]+[p_2:p_3],\text{when sensible},$
 \item $[p_1+q_1:p_2+q_2]=[p_1:p_2]+[q_1:q_2], \, \text{when sensible.}$
\end{enumerate}
\end{lem}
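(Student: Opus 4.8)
The plan is to translate each identity into a statement about the Cuntz-pair group $\KK_h(\mathbb{C},B)$ and to verify it by the manipulations available in that picture. Throughout write $J=B\otimes K$, so that a representation of $\mathbb{C}$ is the same as a projection in $M(J)$ and a Cuntz pair is a pair $(p,q)$ of projections with $p-q\in J$. Fix once and for all isometries $s_1,s_2\in M(J)$ with $s_1s_1^*+s_2s_2^*=1$ (these exist because $J$ is stable), and for $a,b\in M(J)$ write $a\oplus b:=s_1as_1^*+s_2bs_2^*$. Recall that the sum in $\KK_h(\mathbb{C},B)$ is then given by $[p_1:q_1]+[p_2:q_2]=[\,p_1\oplus p_2:q_1\oplus q_2\,]$ and that a Cuntz pair with equal entries is degenerate, hence zero.

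For $(2)$ and $(3)$ I would use rotation homotopies. Let $R_t=\cos t+\sin t\,(s_2s_1^*-s_1s_2^*)\in M(J)$; one checks $(s_2s_1^*-s_1s_2^*)^2=-1$, so $R_t$ is a norm-continuous path of unitaries with $R_0=1$ and $R_{\pi/2}(a\oplus b)R_{\pi/2}^*=b\oplus a$. For $(2)$, additivity gives $[p_1:p_2]+[p_2:p_1]=[\,p_1\oplus p_2:p_2\oplus p_1\,]$; conjugating the second entry by $R_t$ yields a path of Cuntz pairs ending at $[\,p_1\oplus p_2:p_1\oplus p_2\,]=0$. The running difference stays in $J$ because $p_1\oplus p_1$ commutes with $R_t$ while $p_2\oplus p_1-p_1\oplus p_1\in J$. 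For $(3)$, adding the degenerate pair $[p_2:p_2]=0$ lets me write $[p_1:p_2]+[p_2:p_3]=[\,p_1\oplus p_2:p_2\oplus p_3\,]$ and $[p_1:p_3]=[\,p_1\oplus p_2:p_3\oplus p_2\,]$, and the same rotation of the second entry connects $p_2\oplus p_3$ to $p_3\oplus p_2$ through Cuntz pairs, so the two classes coincide.

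Property $(4)$ is the one that needs a genuine idea. By additivity $[p_1:p_2]+[q_1:q_2]=[\,p_1\oplus q_1:p_2\oplus q_2\,]$. Put $v_i=s_1p_i+s_2q_i$; since $p_i\perp q_i$ a direct computation gives $v_i^*v_i=p_i+q_i$ and $v_iv_i^*=p_i\oplus q_i$, so that $p_i\oplus q_i=v_i(p_i+q_i)v_i^*$, and, crucially, $v_1-v_2=s_1(p_1-p_2)+s_2(q_1-q_2)\in J$. Thus $[\,p_1\oplus q_1:p_2\oplus q_2\,]$ is obtained from $[\,p_1+q_1:p_2+q_2\,]$ by conjugating the two entries by partial isometries that agree modulo $J$. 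The heart of the matter, and the step I expect to be the main obstacle, is to show that such a conjugation does not change the $\KK_h$-class; this is the abstract counterpart of the invariance of the essential codimension under compact perturbations of the implementing (co)isometries. I would establish it by a compact-perturbation homotopy: since $v_1-v_2\in J$ the two conjugations can be joined through a family keeping the running discrepancy in $J$, so that the conjugated pairs form a homotopy of Cuntz pairs. Granting this, $(4)$ follows.

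Finally, for $(1)$ observe first that if either $p_1$ or $p_2$ lies in $J$ then so does the other, since $p_1-p_2\in J$; in particular both $[p_1]_0$ and $[p_2]_0$ are defined in $K_0(J)=K_0(B)$. Using $(2)$ and $(3)$ with the zero representation as the middle term, $[p_1:p_2]=[p_1:0]+[0:p_2]=[p_1:0]-[p_2:0]$, so it is enough to identify $[p:0]$ with $[p]_0$ for a projection $p\in J$. But the Cuntz pair $(p,0)$ corresponds, under $\KK_h(\mathbb{C},B)\cong\KK(\mathbb{C},B)\cong K_0(B)$, to the Kasparov module $(pH_B,\iota,0)$, which is exactly the image of $[p]_0$; this is the normalization of the isomorphism. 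Hence $[p_1:p_2]=[p_1]_0-[p_2]_0$, which is $(1)$.
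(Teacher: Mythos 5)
Your handling of (1)--(3) is correct and essentially matches the paper: the paper proves (2) and (3) by the very same rotation homotopies, with the same key observation that the repeated diagonal entry commutes with the rotation so that the running difference stays in $J$; for (1) you take a slightly different but legitimate route, reducing via (2)--(3) to the normalization $[p:0]=[p]_0$ of the isomorphism $\KK_h(\mathbb{C},B)\cong K_0(B)$, where the paper instead identifies $[p_1:p_2]$ with the generalized Fredholm index of $p_2p_1\colon p_1(H_B)\to p_2(H_B)$ via \cite{Mi}.

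The genuine gap is in (4), at exactly the step you flag as the main obstacle. You reduce (4) to the claim that conjugating the entries of the Cuntz pair $(p_1+q_1,\,p_2+q_2)$ by the partial isometries $v_i=s_1p_i+s_2q_i$ does not change the class, and you propose to prove this "compact-perturbation homotopy" from the single fact $v_1-v_2\in J$. That is not a proof, and the difficulty is not where you place it: the issue is not keeping a running discrepancy in $J$, but that nothing in your data produces a path of projections from $v_i(p_i+q_i)v_i^*$ to $p_i+q_i$ at all, let alone a pair of such paths (for $i=1,2$) whose difference stays in $J$ throughout. Agreement modulo $J$ by itself never yields homotopies of Cuntz pairs: the pairs $(p,q)$ and $(p,p)$ agree entrywise modulo $J$, yet they are homotopic only when $[p:q]=0$. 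So the invariance you invoke carries real content. It is true, but a correct proof needs genuine input, e.g.\ dilate $v_i$ to the unitary $W_i=\bigl(\begin{smallmatrix} v_i & 1-v_iv_i^*\\ 1-v_i^*v_i & v_i^*\end{smallmatrix}\bigr)\in M_2(M(J))$, note $W_1-W_2\in M_2(J)$ so $W_2=W_1u$ with $u\in 1+M_2(J)$, use path-connectedness of the unitary group of $M(J)$ (a Kuiper-type theorem, \cite{Mi}) to move $W_1$ to $1$ while carrying $u$ along, and then appeal to invariance under conjugation of one entry by a unitary in $1+M_2(J)$, which is Lemma \ref{L:unitaryequi} combined with property (3).

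The paper's proof of (4) avoids all of this with one observation you did not make: because $p_i\perp q_i$, the formula $t\mapsto (p_i\oplus 0)+R_t\,(0\oplus q_i)\,R_t^*$ (in your notation) is a path of \emph{projections} joining $p_i\oplus q_i$ to $(p_i+q_i)\oplus 0$ --- orthogonality is what makes the sum idempotent for every $t$ --- and the difference of the $i=1$ and $i=2$ paths lies in $J$ for every $t$, since $p_1-p_2$ and $q_1-q_2$ do. This single homotopy of Cuntz pairs gives $[\,p_1\oplus q_1:p_2\oplus q_2\,]=[\,(p_1+q_1)\oplus 0:(p_2+q_2)\oplus 0\,]=[p_1+q_1:p_2+q_2]$ directly, with no conjugation lemma needed. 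Either supply that rotation argument, or prove your conjugation lemma along the lines sketched above; as written, (4) is unproved.
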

\begin{proof}
Let $\phi_i$'s and $\psi_i$'s  be elements in $\Hom(\mb{C},M(B))$ such that $\phi_i(1)=p_i$ and $\psi_i(1)=q_i$ respectively.
 Without loss of generality we let B be a stable $C\sp{*}$-algebra and $\Theta_B: M_2(B) \to B$ be an inner isomorphism.
Then
\begin{itemize}
\item[(1)]In general, the isomorphism \[KK_h(A,B) \to KK(A,B)\] maps
$[\phi_1,\phi_2]$ to a cycle $[\phi_1,\phi_2,1]$. Note that, when $A=\mb{C}$,
\[
(\phi_1,\phi_2,1)=\left(H_B \oplus H_B, \left(\begin{array}{cc}
                                                                            \phi_1 &  \\
                                                                             & \phi_2 \\
                                                                          \end{array}
                                                                        \right), \left(
                                                                                   \begin{array}{cc}
                                                                                     0 & 1 \\
                                                                                     1 & 0 \\
                                                                                   \end{array}
                                                                                 \right)
 \right)\] is a compact perturbation of \[\left(H_B \oplus H_B, \left(\begin{array}{cc}
                                                                            p_1 &  \\
                                                                             & p_2 \\
                                                                          \end{array}
                                                                        \right), \left(
                                                                                   \begin{array}{cc}
                                                                                     0 & p_1p_2 \\
                                                                                     p_2p_1 & 0 \\
                                                                                   \end{array}
                                                                                 \right)
 \right). \] The latter is decomposed to $((1-p_1)(H_B) \oplus( 1-p_2)(H_B),0,0)\oplus \left(p_1(H_B) \oplus p_2(H_B), \left(\begin{array}{cc}
                                                                            p_1 &  \\
                                                                             & p_2 \\
                                                                          \end{array}
                                                                        \right), \left(
                                                                                   \begin{array}{cc}
                                                                                     0 & p_1p_2 \\
                                                                                     p_2p_1 & 0 \\
                                                                                   \end{array}
                                                                                 \right)
 \right)$

 so that
 $(\phi_1,\phi_2,1)$ is represented as \[\left(p_1(H_B) \oplus p_2(H_B), \left(\begin{array}{cc}
                                                                            p_1 &  \\
                                                                             & p_2 \\
                                                                          \end{array}
                                                                        \right), \left(
                                                                                   \begin{array}{cc}
                                                                                     0 & p_1p_2 \\
                                                                                     p_2p_1 & 0 \\
                                                                                   \end{array}
                                                                                 \right)
 \right).\] Now we view $p_2p_1$ as an essential unitary operator from $p_1(H_B)$ to $p_2(H_B)$, thus its generalized Fredholm index is given by $[p_1]_0-[p_2]_0$ (see \cite{Mi}).
 In fact, it is realized as the following diagram
\[
 \begin{CD}
  p_1(H_B)\oplus (1-p_1)(H_B)@>>> p_2(H_B)\oplus (1-p_2)(H_B)\\
  @V I\oplus U^* VV                @A I\oplus W AA \\
  p_1(H_B)\oplus H_B         @> p_2p_1\oplus I >> p_2(H_B)\oplus H_B
  \end{CD}
 \]
 where $U,W \in M(B)=\mc{L}(H_B)$ are isometries such that $UU^*=1-p_1, WW^*=1-p_2$.
\item[(2)] $\left(
                                                           \begin{array}{cc}
                                                             \cos (\frac{\pi}{2}t) & \sin(\frac{\pi}{2}t) \\
                                                             -\sin(\frac{\pi}{2}t) & \cos (\frac{pi}{2}t)\\
                                                           \end{array}
                                                         \right)$ defines a homotopy from $\left(
                                                           \begin{array}{cc}
                                                             \phi_2 & 0 \\
                                                             0 & \phi_1 \\
                                                           \end{array}
                                                         \right) $  to $\left(
                                                           \begin{array}{cc}
                                                             \phi_1 & 0 \\
                                                             0 & \phi_2 \\
                                                           \end{array}
                                                         \right)$. Thus
 \[\begin{split}
[\phi_1,\phi_2]+[\phi_2,\phi_1]&=\left[ \Theta_B\circ \left(
                                                           \begin{array}{cc}
                                                             \phi_1 & 0 \\
                                                             0 & \phi_2 \\
                                                           \end{array}
                                                         \right), \Theta_B\circ \left(
                                                           \begin{array}{cc}
                                                             \phi_2 & 0 \\
                                                             0 & \phi_1 \\
                                                           \end{array}
                                                         \right)
 \right]\\
 &=\left[ \Theta_B\circ \left(
                                                           \begin{array}{cc}
                                                             \phi_1 & 0 \\
                                                             0 & \phi_2 \\
                                                           \end{array}
                                                         \right), \Theta_B\circ \left(
                                                           \begin{array}{cc}
                                                             \phi_1 & 0 \\
                                                             0 & \phi_2 \\
                                                           \end{array}
                                                         \right)
 \right]\\
 &=0.
 \end{split}\]
\item[(3)] Similarly, \[\begin{split}
[\phi_1,\phi_2]+[\phi_2,\phi_3]&=\left[ \Theta_B\circ \left(
                                                           \begin{array}{cc}
                                                             \phi_1 & 0 \\
                                                             0 & \phi_2 \\
                                                           \end{array}
                                                         \right), \Theta_B\circ \left(
                                                           \begin{array}{cc}
                                                             \phi_2 & 0 \\
                                                             0 & \phi_3 \\
                                                           \end{array}
                                                         \right)
 \right]\\
 &=\left[ \Theta_B\circ \left(
                                                           \begin{array}{cc}
                                                             \phi_2 & 0 \\
                                                             0 & \phi_1 \\
                                                           \end{array}
                                                         \right), \Theta_B\circ \left(
                                                           \begin{array}{cc}
                                                             \phi_2 & 0 \\
                                                             0 & \phi_3 \\
                                                           \end{array}
                                                         \right)
 \right]\\
 &=[\phi_2,\phi_2]+[\phi_1,\phi_3]\\
 &=[\phi_1,\phi_3].
 \end{split}\]
 \item[(4)] Since $\phi_i$ and $\psi_i$ are orthogonal, i.e. $\phi_i \psi_i=0$,  $\phi_i+\psi_i$ is a homomorphism.  Note that $\left(
                                                                                                                                 \begin{array}{cc}
                                                                                                                                   \phi_i & 0 \\
                                                                                                                                   0 & \psi_i \\
                                                                                                                                 \end{array}
                                                                                                                               \right)$ is homotopic to  $\left(
                                                           \begin{array}{cc}
                                                             \phi_i+\psi_i & 0 \\
                                                              0 & 0 \\
                                                           \end{array}
                                                         \right)$ up to stability.
 \[
 \begin{split}
 [\phi_1,\phi_2]+[\psi_1,\psi_2]&=\left[ \Theta_B\circ \left(
                                                           \begin{array}{cc}
                                                             \phi_1 & 0 \\
                                                             0 & \psi_1 \\
                                                           \end{array}
                                                         \right), \Theta_B\circ \left(
                                                           \begin{array}{cc}
                                                             \phi_2 & 0 \\
                                                             0 & \psi_2 \\
                                                           \end{array}
                                                         \right)
 \right]\\
 &=\left[ \Theta_B\circ \left(
                                                           \begin{array}{cc}
                                                             \phi_1+\psi_1 & 0 \\
                                                             0 & 0 \\
                                                           \end{array}
                                                         \right), \Theta_B\circ \left(
                                                           \begin{array}{cc}
                                                             \phi_2+\psi_2 & 0 \\
                                                             0 & 0 \\
                                                           \end{array}
                                                         \right)
 \right]\\
 &=[\phi_1+\psi_1,\phi_2+\psi_2].
 \end{split}
 \]
 \end{itemize}
\end{proof}
 \begin{lem}\label{L:unitaryequi}
  Let $p$ and $q$ be projections in $M(B)$ such that $p-q \in B$. If there is a
  unitary $U \in 1+ B$ such that $UpU^*=q$, then $[p:q]=0$. In
  particular, if $\| p- q\|<1 $, then $[p:q]=0$.
 \end{lem}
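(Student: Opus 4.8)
The plan is to work in the Fredholm picture of $\KK(\mb{C},B)$ and to exhibit an explicit operator homotopy from the cycle representing $[p:q]$ to a degenerate cycle. Recall from the excerpt that $[p:q]=[\phi,\psi]\in\KK_h(\mb{C},B)$, and that the isomorphism $\KK_h(\mb{C},B)\simeq\KK(\mb{C},B)$ sends $[\phi,\psi]$ to $[\phi,\psi,1]$, the class of the cycle $(H_B\oplus H_B,\ \phi\oplus\psi,\ 1)$, where $\phi(1)=p$, $\psi(1)=q$ and $u=1\in\mc{L}(H_B)$. So it suffices to show that this cycle is trivial in $\KK(\mb{C},B)$.

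First I would simply interpolate linearly between the identity and $U$. Set $u_t=(1-t)\,1+t\,U$ for $t\in[0,1]$, a norm-continuous path in $M(B)=\mc{L}(H_B)$. Since $U\in 1+B$ we have $u_t-1=t(U-1)\in B=\mc{K}(H_B)$ for every $t$; consequently each of $u_t\phi(a)-\psi(a)u_t$, $\phi(a)(u_t^*u_t-1)$ and $\psi(a)(u_tu_t^*-1)$ lies in $\mc{K}(H_B)$, being obtained by multiplying an element of the ideal $B$ on either side by multipliers. Hence $(\phi,\psi,u_t)$ is a legitimate $\KK$-cycle for each $t$, and $t\mapsto u_t$ is an operator homotopy. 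I want to stress that the intermediate $u_t$ need not be unitary; this is precisely why the Fredholm picture (rather than the Cuntz picture) makes the argument effortless and lets me sidestep the question of whether $U$ can be connected to $1$ inside the unitary group of $1+B$.

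It then remains to identify the two endpoints. At $t=0$ we recover $(\phi,\psi,1)$, whose class is $[p:q]$. At $t=1$ we obtain $(\phi,\psi,U)$; since $UpU^*=q$ forces $\psi(a)=U\phi(a)U^*$, we get $U\phi(a)-\psi(a)U=0$, while $U^*U=UU^*=1$ gives $\phi(a)(U^*U-1)=0$ and $\psi(a)(UU^*-1)=0$. Thus $(\phi,\psi,U)$ is a \emph{degenerate} cycle and represents $0$. By homotopy invariance of $\KK$, $[p:q]=[\phi,\psi,1]=[\phi,\psi,U]=0$.

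Finally, for the ``in particular'' clause I would reduce to the case just treated by producing the standard intertwining unitary. When $\|p-q\|<1$ the element $z=qp+(1-q)(1-p)$ is invertible and satisfies $zp=qz$; moreover $z-1\in B$ because $p-q\in B$, so its unitary part $U=z(z^*z)^{-1/2}$ lies in $1+B$ and satisfies $UpU^*=q$, whereupon the first part applies. The only step that genuinely requires care is the verification that each $(\phi,\psi,u_t)$ is a valid cycle — that the three defining expressions stay in $\mc{K}(H_B)$ along the whole path — but this is immediate from $u_t-1\in B$, so I do not expect any real obstacle beyond this bookkeeping.
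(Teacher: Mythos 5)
Your proof is correct and takes essentially the same route as the paper: reduce to showing the Kasparov cycle $(\phi,\psi,1)$ is trivial by means of the unitary $U\in 1+B$, and settle the $\|p-q\|<1$ case with the very same element $qp+(1-q)(1-p)$ and its polar decomposition. If anything, your explicit linear operator homotopy $u_t=(1-t)\cdot 1+tU$ is more careful than the paper's one-line appeal to unitary equivalence with the degenerate cycle $(\phi,\phi,1)$, since it makes visible exactly where $U\in 1+B$ is needed.
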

 \begin{proof}
 Since $(\phi_1,\phi_2,1)$ is unitarily equivalent to $(\phi_1,\phi_1,1)$ which is a degenerate element, it follows that $[p:q]=0$ in $\KK(\mb{C},B)$

 If $\|p-q\| <1$, we can take $a=(1-q)(1-p)+qp \in 1+ B$. Since
 $aa^*=a^*a=1-(p-q)^2 \in 1+ B$,

   \[\|a^*a -1\|=\|p-q\|^2 < 1, \quad \|aa^*-1\|=\|p-q\|^2 < 1. \] Moreover, it
   follows that
     \begin{align*}
     ap &= qp =qa.
     \end{align*}
  Hence, $a$ is invertible element and $U=a(a^*a)^{-\frac{1}{2}} \in 1+B$
  is a unitary such that $UpU^*=q$.
 \end{proof}
\begin{prop}\label{P:restrictions}
 Let $p, q$ be projections in M(B) such that $p-q \in B$. Suppose that $(K_0(B),K_0(B)^{+})$ is an ordered group  where the positive cone $K_0(B)^{+}=\{[p]_0\mid p \in \mc{P}_{\infty}(B)\}$.
 \begin{itemize}
 \item [(1)] If $q \in B$, then $[p:q] \geq -[q]_0$,
 \item [(2)] If $1-q \in \widetilde{B}$, then $[p:q] \leq [1-q]_0 $.
 \end{itemize}
 \end{prop}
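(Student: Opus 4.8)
The plan is to derive both inequalities from Lemma~\ref{L:properties}, in particular from the formula $[p_1:p_2]=[p_1]_0-[p_2]_0$ valid whenever one of the arguments lies in $B$, together with the positivity encoded in the ordered-group hypothesis: under $K_0(B)^{+}=\{[e]_0\mid e\in\mc{P}_{\infty}(B)\}$ one has $[e]_0\ge 0$ for every projection $e\in B$.

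For part (1), I would first observe that the hypotheses $q\in B$ and $p-q\in B$ force $p=(p-q)+q\in B$, so both projections lie in $B$. Lemma~\ref{L:properties}(1) then applies verbatim and gives $[p:q]=[p]_0-[q]_0$. Since $p$ is a projection in $B$ we have $[p]_0\ge 0$, whence $[p:q]=[p]_0-[q]_0\ge -[q]_0$. This is the entire content of (1); no further work is needed.

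For part (2) the idea is to pass to complements and reuse the computation of (1). The key step I would isolate is the complementation identity
\[
[p:q]=[1-q:1-p].
\]
To prove it, I would apply the additivity of Lemma~\ref{L:properties}(4) to the orthogonal decompositions $1=p+(1-p)=q+(1-q)$ (this is ``sensible'' because $p-q\in B$ and $(1-p)-(1-q)=q-p\in B$), obtaining $[1:1]=[p:q]+[1-p:1-q]$. The left-hand side vanishes since the Cuntz pair representing it has equal entries and is degenerate (equivalently, apply Lemma~\ref{L:properties}(3) with all three projections equal), so $[p:q]=-[1-p:1-q]$, and Lemma~\ref{L:properties}(2) rewrites the right-hand side as $[1-q:1-p]$. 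With the identity in hand, the hypothesis $1-q\in\widetilde B$ (which for a projection in the unitization forces $1-q\in B$, the case relevant to the theorem) together with $(1-q)-(1-p)=p-q\in B$ gives $1-p\in B$; Lemma~\ref{L:properties}(1) then yields $[1-q:1-p]=[1-q]_0-[1-p]_0$, and $[1-p]_0\ge 0$ forces $[1-q:1-p]\le[1-q]_0$. Combining with the complementation identity gives $[p:q]\le[1-q]_0$.

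The main obstacle I anticipate is the careful justification of the complementation identity: one must confirm that $[1:1]=0$ genuinely follows from degeneracy (or from Lemma~\ref{L:properties}(3) applied to the constant pair) and that the ``sensible'' hypothesis of Lemma~\ref{L:properties}(4) is met by the pairs $(p,1-p)$ and $(q,1-q)$, since additivity was only asserted ``when sensible''. A secondary point requiring care is the reading of the condition $1-q\in\widetilde B$: as a projection in the unitization it forces either $1-q\in B$ or $q\in B$, and I would check that in the operative case $1-q\in B$ the complementary projection $1-p$ again lies in $B$, so that the classes $[1-q]_0,[1-p]_0$ live in $K_0(B)$ and the inequality is meaningful in the ordered group $(K_0(B),K_0(B)^{+})$.
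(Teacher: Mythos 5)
Your proposal is correct and follows essentially the same route as the paper: part (1) is Lemma~\ref{L:properties}(1) plus positivity of $[p]_0$, and part (2) uses the additivity of Lemma~\ref{L:properties}(4) applied to $1=p+(1-p)=q+(1-q)$ to get $[p:q]=-[1-p:1-q]=[1-q]_0-[1-p]_0\leq[1-q]_0$. You simply make explicit a few points the paper leaves implicit (that $p\in B$ in case (1), the degeneracy giving $[1:1]=0$, and the reading of $1-q\in\widetilde{B}$), which is fine.
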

 \begin{proof}
 \begin{itemize}
 \item[(1)] Note that $[p:q]=[p]_0-[q]_0$ by Lemma \ref{L:properties}-(1). Hence $[p:q]\geq -[q]_0$.
 \item[(2)] Notice that $[1-p]_0-[1-q]_0\in K_0(B)$. By Lemma \ref{L:properties}-(4) $[p:q]=-[1-p:1-q]=[1-q]_0-[1-p]_0\leq [1-q]_0$.
 \end{itemize}
 \end{proof}
 \begin{lem}\label{L:homotopy}
 Suppose projections $p_t,  q_t \in M(B)$ are defined for each $t$ in a set of $\mathbb{R}$.  Then
 $[p_t:q_t]$ is constant if $t \to p_t - q_t$ is norm continuous  in $ B$.
 \end{lem}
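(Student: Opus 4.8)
The plan is to read the assertion as saying that the assembled path of Cuntz pairs $(\phi_t,\psi_t)$, with $\phi_t(1)=p_t$ and $\psi_t(1)=q_t$, is a homotopy of Cuntz pairs in the sense recalled before Definition \ref{D:BDF}, and then to invoke the very definition of $\KK_h(\mb{C},B)$. First I would reduce to the case in which the parameter set is a compact interval, say $[0,1]$, since constancy on an interval is exactly what the homotopy relation delivers and the general connected case follows by restricting to subintervals $[s,t]$. Observe that the hypothesis that $t\mapsto p_t-q_t$ is norm continuous into $B$ already guarantees $p_t-q_t\in B$ for every $t$, so that each class $[p_t:q_t]\in\KK_h(\mb{C},B)\simeq K_0(B)$ is defined in the sense of Definition \ref{D:BDF}.

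The key step is to package the two families into a single Cuntz pair over the algebra $C([0,1])\ot B$. Using the identification of $M(C([0,1])\ot B)$ with the bounded strictly continuous functions from $[0,1]$ to $M(B)$, I would let $\Phi(1)$ be the function $t\mapsto p_t$ and $\Psi(1)$ the function $t\mapsto q_t$; since each of these projection valued maps is strictly continuous, $\Phi$ and $\Psi$ are genuine representations of $\mb{C}$ into $M(C([0,1])\ot B)$. The content of the norm continuity hypothesis is precisely that $\Phi(1)-\Psi(1)$, namely $t\mapsto p_t-q_t$, lies in the ideal $C([0,1])\ot B$ rather than merely in the multiplier algebra. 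Hence $(\Phi,\Psi)$ is a Cuntz pair over $C([0,1])\ot B$, that is, a homotopy of Cuntz pairs. Composing with the evaluation $\operatorname{ev}_t\colon C([0,1])\ot B\to B$, which extends to a strictly continuous $*$-homomorphism $M(C([0,1])\ot B)\to M(B)$ carrying $(\Phi,\Psi)$ to $(\phi_t,\psi_t)$, functoriality gives $(\operatorname{ev}_t)_*[\Phi,\Psi]=[p_t:q_t]$. Because $(\Phi,\Psi)$ is by definition a homotopy between its endpoints, the classes $[p_0:q_0]$ and $[p_1:q_1]$ coincide in $\KK_h(\mb{C},B)$, and applying this to every subinterval yields the desired constancy of $t\mapsto[p_t:q_t]$.

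The main obstacle is the continuity bookkeeping that makes the previous paragraph legitimate. One must ensure that $t\mapsto p_t$ and $t\mapsto q_t$ define honest projections in $M(C([0,1])\ot B)$, i.e.\ that they are strictly continuous; this holds in all our applications, where $p_t$ and $q_t$ are the values of strictly continuous sections, and it is the genuinely new hypothesis, the norm continuity of the difference, that forces the off-diagonal term into the ideal $C([0,1])\ot B$ rather than merely into the multiplier algebra. The delicate point is exactly this interplay between the strict topology on $M(B)$, in which the individual projections vary, and the norm topology of $B$, in which only their difference is required to vary, and it is what prevents a naive pointwise argument. In particular one cannot simply compare $p_s$ with $p_t$ through Lemma \ref{L:unitaryequi}, since the individual projections at different parameters need not be close; the homotopy-of-Cuntz-pairs construction is designed to use only the continuity that is actually available and to treat the whole interval in one stroke.
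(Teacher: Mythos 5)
Your proposal is correct and is essentially the paper's own argument: the paper's proof reads ``Straightforward,'' and the accompanying remark explains that constancy ``is now built in the definition of $\KK_h$,'' which is exactly your construction of assembling the family into a Cuntz pair $(\Phi,\Psi)$ over $C([0,1])\otimes B$, i.e.\ a homotopy of Cuntz pairs, and evaluating at endpoints. Your caveat that one needs $t\mapsto p_t$ and $t\mapsto q_t$ to be strictly continuous in order to form this pair --- a hypothesis not written in the lemma but satisfied in every application in the paper, where $p_t,q_t$ come from elements of $C_b(X_i,M(B)_s)$ --- is a fair and careful observation that the paper leaves implicit.
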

 \begin{proof}
Straightforward.
 \end{proof}
\begin{rem}
 Originally, the proof of this lemma for $B=K$ and $M(B)=B(H)$ was nontrivial (see \cite[Corollary 2.6]{BL}). However, it is now built in the definition of $\KK_h$.
\end{rem}
The next theorem, which we do not claim its originality, was proved in \cite{Lee} exhibits the most important property of the essential codimension (see \cite[Theorem 2.7]{BL}).
 \begin{thm}\label{T:BDF}
Let $B$ be a non-unital ($\sigma$-unital) purely infinite simple $C\sp{*}$-algebra such that $M(B\otimes K)$ has real rank zero.
Suppose two projections $p$ and $q$ in $M(B\otimes K)=\mathcal{L}(H_B)$ such that $p-q \in B\otimes K$ and neither of them is in $B\otimes K$. If $[p:q] \in K_{0}(B)$ vanishes, then there is a unitary $u$ in $ 1 + B\otimes K$ such that $upu^*=q $.
\end{thm}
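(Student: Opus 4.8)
The plan is to reduce the statement to an application of the author's deformation theorem \cite[Theorem 3.3]{Lee} followed by the elementary perturbation Lemma \ref{L:unitaryequi}. Write $J = B \otimes K$, a $\sigma$-unital, purely infinite, simple, stable $C\sp*$-algebra, and $M(J) = \mathcal{L}(H_B)$. The hypothesis $p - q \in J$ says exactly that, taking the representations $\phi_p,\phi_q\colon \mathbb{C}\to M(J)$ with $\phi_p(1)=p$ and $\phi_q(1)=q$, the pair $(\phi_p,\phi_q)$ is a Cuntz pair, and by Definition \ref{D:BDF} the hypothesis $[p:q]=0$ means precisely that its class $[\phi_p,\phi_q]\in \KK_h(\mathbb{C},B)\cong K_0(B)$ vanishes. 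First I would record the structural facts that make $\phi_p,\phi_q$ admissible inputs for the deformation machinery: because $J$ is stable and purely infinite simple and $M(J)$ has real rank zero, every projection in $M(J)$ that does not lie in $J$ is full and properly infinite. Thus the standing hypothesis that neither $p$ nor $q$ belongs to $J$ guarantees that $p$ and $q$ are full, properly infinite projections, i.e. $\phi_p$ and $\phi_q$ are the kind of representations of $\mathbb{C}$ to which \cite[Theorem 3.3]{Lee} applies; I would treat the main case in which $1-p,1-q\notin J$ as well and defer the complementary case to the end.

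The heart of the argument is to feed the vanishing of $[\phi_p,\phi_q]\in \KK_h(\mathbb{C},B)$ into \cite[Theorem 3.3]{Lee} to obtain a proper asymptotic unitary equivalence between $\phi_p$ and $\phi_q$: a norm-continuous path of unitaries $u_t\in 1+J$, $t\in[0,\infty)$, with $u_t p u_t^*-q\in J$ for every $t$ and $\|u_t p u_t^*-q\|\to 0$ as $t\to\infty$. Fixing $t_0$ with $\|u_{t_0}p u_{t_0}^*-q\|<1$ and setting $p'=u_{t_0}p u_{t_0}^*$, the projections $p'$ and $q$ satisfy $p'-q\in J$ and $\|p'-q\|<1$, so Lemma \ref{L:unitaryequi} (applied with $J$ in place of $B$) produces a unitary $v\in 1+J$ with $vp'v^*=q$. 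Since $1+J$ is a group under multiplication, $u=vu_{t_0}\in 1+J$ and $upu^*=q$, which is exactly the desired conclusion.

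The main obstacle is entirely concentrated in the invocation of \cite[Theorem 3.3]{Lee}: one must know that the $\KK_h$-class is the \emph{only} obstruction to proper asymptotic unitary equivalence in this setting, which is the deep content borrowed from \cite{Lee}, and one must confirm that its hypotheses are met here, namely that $\phi_p,\phi_q$ are full, properly infinite representations absorbing the relevant degenerate cycles. Everything after that step is soft: the passage from an asymptotic equivalence to a single conjugating unitary is a routine norm estimate combined with Lemma \ref{L:unitaryequi}.

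Finally, the complementary case in which $1-p$ (equivalently $1-q$, since $(1-p)-(1-q)=q-p\in J$) lies in $J$ is not covered by the proper-infiniteness argument and must be dispatched by hand. There Lemma \ref{L:properties}(1) and (4) give $[p:q]=-[1-p:1-q]=[1-q]_0-[1-p]_0$, so $[p:q]=0$ forces $[1-p]_0=[1-q]_0$ in $K_0(B)$; the classification of projections in the purely infinite simple algebra $J$ then yields a partial isometry in $J$, which is readily completed to a unitary in $1+J$ carrying $1-p$ to $1-q$ and hence $p$ to $q$. This closes the argument in all cases.
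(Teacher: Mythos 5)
Your main-case strategy is, in substance, the right one, but note that this paper contains no proof of Theorem \ref{T:BDF} at all: it is imported verbatim from \cite{Lee} (``which we do not claim its originality, was proved in \cite{Lee}''), so the real comparison is with the proof in \cite{Lee}. That proof does what your second paragraph describes: verify that $\phi_p,\phi_q$ are absorbing representations (this is where pure infiniteness, simplicity, real rank zero of $M(B\otimes K)$ and $p,q\notin B\otimes K$ enter, via Zhang's structure theory and an absorption criterion), invoke the Dadarlat--Eilers-type theorem that vanishing of the $\KK_h$-class forces proper asymptotic unitary equivalence, and convert the asymptotic statement into an exact one by evaluating at a finite time and perturbing; your last step via Lemma \ref{L:unitaryequi} is fine (strictly, it is the \emph{proof} of that lemma, not its statement, that produces the unitary $v\in 1+B\otimes K$ from $\|p'-q\|<1$). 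However, your citation is wrong in a way that matters: \cite[Theorem 3.3]{Lee} is the projection-lifting theorem, quoted in this paper as Theorem \ref{T:lifting}, and in \cite{Lee} that theorem is \emph{deduced from} the present statement (one glues local lifts using the unitaries in $1+B\otimes K$ that Theorem \ref{T:BDF} provides). So, as literally written, your proof is circular; moreover the lifting theorem could not do the job anyway, since a lift $g$ of the corona projection represented by $(p,q)$ only satisfies $g(0)-p\in B\otimes K$, and conjugating $g(0)$ to $p$ is again the very problem at hand. The result you actually need is the proper asymptotic unitary equivalence theorem of Section 2 of \cite{Lee}, and the ``absorption'' verification you defer in your third paragraph is precisely the nontrivial content there, not a routine hypothesis check.

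The second genuine gap is your final paragraph. First, the case split is unnecessary: if $1-p\in B\otimes K$, write $1=\sum_n E_n$ strictly with $E_n=1\otimes e_{nn}$, so that the tails $F_N=\sum_{n\geq N}E_n$ satisfy $F_N\sim 1$ (via $1\otimes w$ for an isometry $w$ with range $\sum_{n\ge N}e_{nn}$) and $\|(1-p)F_N\|\to 0$ by strict convergence; hence $p$ dominates a subprojection equivalent to $F_N\sim 1$, which makes $p$ full and properly infinite, and the main argument applies to it unchanged. Second, the argument you do give for that case does not close: from $[1-p]_0=[1-q]_0$ and pure infiniteness you indeed get a partial isometry $v\in B\otimes K$ with $v^*v=1-p$, $vv^*=1-q$, but ``readily completed to a unitary in $1+B\otimes K$'' is exactly the hard point. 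Any completion has the form $u=v+w$ with $w^*w=p$, $ww^*=q$, and $u\in 1+B\otimes K$ forces $w-1\in B\otimes K$; producing a partial isometry from $p$ to $q$ that is a compact perturbation of the identity is a special case of the statement being proved, so this step begs the question. Dropping the case split and routing everything through the asymptotic uniqueness theorem of \cite{Lee} removes both problems.
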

\begin{rem}\label{R:properequivalence}
\begin{itemize}
\item[(i)] As it was pointed out in \cite{DE} and \cite{Lee}, the crucial point of Theorem \ref{T:BDF} is that the implementing unitary $u$ has the form ``identity + compact''. This requirement to obtain the reasonable generalization of BDF's original statement has been very useful in $\KK$-theory (see \cite{DE} and \cite{Lee}).
\item[(ii)] Without restrictions on $C\sp*$-algebra  $B$, but on $p \in M(B\otimes K)$ being halving projection, any compact perturbation of $p$ is of the form $upu^*$ where a unitary $u$ is in $ 1 + B\otimes K$ \cite{Zh3}.
\end{itemize}    
\end{rem}

 \section{Deformation of a projection and its applications}\label{S:deformation}

 Let $B$ be a simple stable $C\sp*$-algebra such that the multiplier algebra $M(B)$ has real rank zero.
Let $X$ be $[0,1], [0,\infty)$, $(-\infty,\infty)$ or
$\mathbb{T}=[0,1]/\{0,1\}$. When $X$ is compact, let $I=C(X)\otimes
   B$ which is the $C\sp{*}$-algebra of (norm continuous) functions from $X$ to $B$. When $X$ is not compact,  let $I=C_0(X)\otimes B$ which is the $C\sp*$-algebra of continuous functions from $X$ to $B$ vanishing at infinity. Then $M(I)$ is given by $C_b(X, M(B)_s)$, which is the space of
   bounded functions from $X$ to $M(B)$, where $M(B)$ is given the strict
   topology. Let $\mathcal{C}(I)=M(I)/I$ be the corona algebra of $I$ and also
   let $\pi:M(I) \to \mathcal{C}(I)$ be the natural quotient map. Then an element
   $\mathbf{f}$ of the corona algebra can be represented as follows:  Consider a
 finite partition of $X$, or $X \smallsetminus \{0,1\}$ when $X=\mathbb{T}$ given by partition points $x_1 < x_2 < \cdots
 < x_n $ all of which are in the interior of $X$ and divide $X$ into
 $n+1$ (closed) subintervals $X_0,X_1,\cdots,X_{n}$. We can take $f_i \in
 C_b(X_i, M(B)_s)$ such that $f_i(x_i) -f_{i-1}(x_i)\in B$
 for $i=1,2,\cdots,n$ and $f_0(x_0)-f_n(x_0) \in B$  where $x_0=0=1$ if $X$ is $\mathbb{T}$. The following lemma and the statement after it were shown in \cite{Lee}.

 \begin{lem}
 The coset in $\mathcal{C}(I)$ represented by
 $(f_0,\cdots,f_n)$  consists of functions $f$ in $M(I)$ such that $f- f_i \in
 C(X_i)\otimes B$ for every $i$ and $f-f_i $ vanishes (in norm) at any
 infinite end point of $X_i$.
 \end{lem}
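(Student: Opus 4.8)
The plan is to produce a single explicit lift $f^{\ast}\in M(I)$ of $\mathbf{f}$ and then read off the whole coset from the relation $\pi^{-1}(\mathbf{f})=f^{\ast}+I$. To build $f^{\ast}$ I would correct the mismatches of the data at the partition points. For $1\le i\le n$ put $b_i=f_i(x_i)-f_{i-1}(x_i)$, which lies in $B$ by hypothesis (together with $b_0=f_0(x_0)-f_n(x_0)\in B$ in the circle case). On each $X_i$ choose a norm-continuous $B$-valued function $\beta_i$ (an element of $C(X_i)\otimes B$, e.g. $b_i$ times a scalar cutoff) with $\beta_i(x_i)=b_i$, supported near the left endpoint $x_i$ and vanishing at the opposite endpoint of $X_i$ and at infinity; in the interval case take $\beta_0\equiv 0$, and in the circle case let $\beta_0$ be supported near $x_0$ with $\beta_0(x_0)=b_0$. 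Setting $f^{\ast}|_{X_i}=f_i-\beta_i$, every junction now matches, since at $x_i$ the correction on the piece to its left vanishes while the correction $\beta_i$ on the right absorbs $b_i$; hence $f^{\ast}$ is a genuine function on $X$. Because each $f_i$ is bounded and strictly continuous and each $\beta_i$ is norm- (hence strictly) continuous, the pasting lemma for the strict topology yields $f^{\ast}\in C_b(X,M(B)_s)=M(I)$, and by construction $f^{\ast}-f_i=-\beta_i\in C(X_i)\otimes B$ vanishes at every infinite endpoint of $X_i$. This $f^{\ast}$ is a lift representing $\mathbf{f}$, so $\mathbf{f}=\pi(f^{\ast})$.

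With such an $f^{\ast}$ fixed, the characterization is immediate in both directions. If $f\in M(I)$ satisfies $\pi(f)=\mathbf{f}$, then $f-f^{\ast}\in I$, so on each $X_i$ one has $f-f_i=(f-f^{\ast})+(f^{\ast}-f_i)\in C(X_i)\otimes B$ with both summands vanishing at the infinite endpoints. Conversely, suppose $f-f_i\in C(X_i)\otimes B$ vanishes at the infinite endpoints for every $i$. On $X_i$ we then have $f-f^{\ast}=(f-f_i)-(f^{\ast}-f_i)$, a norm-continuous $B$-valued function vanishing at infinite endpoints; since $f$ and $f^{\ast}$ are honest functions on all of $X$, the one-sided values of $f-f^{\ast}$ agree automatically at each partition point, so these pieces paste to a single norm-continuous $B$-valued function on $X$ that vanishes at infinity, i.e. $f-f^{\ast}\in I$. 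Hence $\pi(f)=\pi(f^{\ast})=\mathbf{f}$. As a byproduct, replacing the $\beta_i$ by other admissible corrections changes $f^{\ast}$ only inside $f^{\ast}+I$, so the coset does not depend on the auxiliary choices and is therefore well defined by $(f_0,\dots,f_n)$.

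The routine algebra above isolates the one genuinely delicate point, which is the construction of $f^{\ast}$ as a bona fide element of $M(I)$ rather than a mere piecewise object: one must check that gluing strictly continuous pieces across the partition points again produces a map that is strictly continuous at those points (not only on each closed subinterval), and one must arrange the corrections $\beta_i$ to die off at the infinite ends so that the deformation $f^{\ast}-f_i$ honors the vanishing-at-infinity built into $I$. The cyclic case $X=\mathbb{T}$ is the place to be most careful: it requires the extra matching datum $b_0\in B$ and a check that fixing all $n+1$ junctions around the circle carries no global obstruction. The hard part is seeing that there is none, and this holds precisely because each $\beta_i$ influences only a single junction, so once every mismatch is known to lie in $B$ the corrections can be prescribed independently around the circle.
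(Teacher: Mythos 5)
Your proof is correct and complete. One caveat about the comparison: this paper does not actually prove the lemma — it is imported verbatim from \cite{Lee} (``The following lemma and the statement after it were shown in [Lee]'') — so the only available benchmark is that cited argument, and yours is the standard patching construction one would expect there: absorb each junction mismatch $b_i=f_i(x_i)-f_{i-1}(x_i)\in B$ by a localized cutoff correction $\beta_i$, glue the corrected pieces into a single element $f^{\ast}\in C_b(X,M(B)_s)=M(I)$, and then identify the set described in the statement with the coset $f^{\ast}+I$. You also handle the genuinely delicate points accurately: norm continuity of $\beta_i$ implies strict continuity, so the pasting lemma over finitely many closed subintervals gives $f^{\ast}\in M(I)$; the compact support of each $\beta_i$ near $x_i$ makes $f^{\ast}-f_i$ vanish at infinite endpoints, as required by $I=C_0(X)\otimes B$ in the noncompact cases; and on the circle there is indeed no global obstruction, precisely because each correction $\beta_i$ vanishes at the right endpoint of $X_i$ and hence affects only the single junction $x_i$, so the $n+1$ junctions can be fixed independently.
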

 Similarly $(f_0,\cdots,f_n)$ and $(g_0,\cdots,g_n)$
 define the same element of $\mathcal{C}(I)$ if and only if $f_i - g_i \in
 C(X_i)\otimes B$ for $i=0,\cdots,n$ if $X$ is compact.  $(f_0,\cdots,f_n)$ and $(g_0,\cdots,g_n)$
 define the same element of $\mathcal{C}(I)$ if and only if  $f_i - g_i \in
 C(X_i)\otimes B$ for $i=0,\cdots,n-1$, $f_n -g_n \in
 C_0([x_n,\infty))\otimes B$  if $X$ is $[0.\infty)$.  $(f_0,\cdots,f_n)$ and $(g_0,\cdots,g_n)$
 define the same element of $\mathcal{C}(I)$ if and only if
  $f_i - g_i \in
 C(X_i)\otimes B$ for $i=1,\cdots,n-1$, $f_n -g_n \in
 C_0([x_n,\infty))\otimes B$, $f_0-g_0 \in
 C_0((-\infty,x_1])\otimes B$ if $X=(-\infty,\infty)$.\\

A virtue of the above unnecessarily complicated description of an element in $\mc{C}(I)$ is the following theorem which says a projection in $\mc{C}(I)$ is locally liftable.
\begin{thm}\cite[Theorem 3.2]{Lee}\label{T:locallift}
Let $I$ be $C(X)\otimes B$ or $C_0(X)\otimes B$ where $B$ is a stable $C\sp{*}$-algebra such that $M(B)$  has real rank zero.
Then a projection $\mathbf{f}$ in $M(I)/I$ can be represented by $(f_0,f_1,\cdots, f_n)$ as above where $f_i$ is a projection valued function in
$C(X_i)\otimes M(B)_s$ for each $i$.
\end{thm}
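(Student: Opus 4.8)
The plan is to establish local liftability and then patch. First I would lift $\mathbf f$ to a self-adjoint section and compress its spectrum: taking any self-adjoint preimage and applying a continuous function $\beta\colon\mathbb R\to[0,1]$ with $\beta(0)=0$, $\beta(1)=1$ produces $F\in M(I)=C_b(X,M(B)_s)$ with $F=F^*$, $0\le F\le 1$ and $\pi(F)=\mathbf f$. Since $\mathbf f$ is a projection, the defect $d:=F-F^2$ lies in $I=C_0(X)\otimes B$, so $x\mapsto d(x)$ is \emph{norm} continuous and vanishes at any infinite endpoint. The theorem will follow once I produce a finite partition of $X$ into closed subintervals $X_i$ together with strictly continuous projection-valued functions $f_i$ on $X_i$ satisfying $f_i-F\in C(X_i)\otimes B$, since then the matching relations and the identification of the coset are immediate.

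The core is a local statement: for each interior point $x_0$ there is a closed neighborhood $N$ of $x_0$ and a strictly continuous projection $f\colon N\to M(B)$ with $f-F\in C(N)\otimes B$. To prove it, note that the image of $F(x_0)$ in $M(B)/B$ is a projection, because $d(x_0)\in B$; since $M(B)$ has real rank zero, projections lift along the quotient $M(B)\to M(B)/B$, so there is a genuine projection $p_0\in M(B)$ with $p_0-F(x_0)\in B$. On a compact neighborhood of $x_0$ set $\tilde F:=F+(p_0-F(x_0))$; this is self-adjoint and strictly continuous, differs from $F$ by a constant element of $B$, and satisfies $\tilde F(x_0)=p_0$. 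Hence $\tilde F-\tilde F^2$ is norm continuous and vanishes at $x_0$, so there is a closed neighborhood $N$ of $x_0$ with $\|\tilde F(x)-\tilde F(x)^2\|<1/4$ for all $x\in N$; consequently $\mathrm{sp}(\tilde F(x))$ avoids a fixed neighborhood of $1/2$ throughout $N$. Fixing a continuous $h$ equal to $0$ below the gap and $1$ above it, I set $f:=h(\tilde F)$. Then $f$ is projection-valued; it is strictly continuous because $\tilde F$ is bounded and strictly continuous and $h$ can be approximated uniformly by polynomials on the gap-avoiding spectra; and $f-\tilde F\in C(N)\otimes B$ because the image of $\tilde F(x)$ in $M(B)/B$ is a projection fixed by $h$ (as $h(0)=0$, $h(1)=1$), so $f(x)$ and $\tilde F(x)$ have the same image in $M(B)/B$. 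Combining, $f-F\in C(N)\otimes B$.

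Near an infinite endpoint the real-rank-zero step is unnecessary: there $\|d(x)\|\to 0$, so the spectral gap already holds on a neighborhood of infinity and $h(F)$ is the desired projection directly. By compactness in the bounded directions (and the decay of $d$ at infinity in the unbounded ones) finitely many such neighborhoods cover $X$; choosing partition points $x_1<\cdots<x_n$ inside the overlaps yields closed subintervals $X_0,\dots,X_n$ and strictly continuous projections $f_i$ on $X_i$ with $f_i-F\in C(X_i)\otimes B$. At each $x_i$ both $f_i(x_i)$ and $f_{i-1}(x_i)$ differ from $F(x_i)$ by elements of $B$, giving the matching relation $f_i(x_i)-f_{i-1}(x_i)\in B$; and because every $f_i-F\in C(X_i)\otimes B$, the coset determined by $(f_0,\dots,f_n)$ is exactly $\pi(F)=\mathbf f$ by the coset description recorded before the theorem.

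I expect the main obstacle to be the strict-topology continuity of the locally defined spectral projection $f=h(\tilde F)$: strict continuity of $\tilde F$ does not in general survive an arbitrary continuous functional calculus, and the argument genuinely uses the \emph{uniform} spectral gap on $N$, so that a single polynomial approximation of $h$ works simultaneously for all $x\in N$, together with joint strict continuity of multiplication on norm-bounded sets. The real-rank-zero hypothesis enters precisely to manufacture that gap at the anchor point $x_0$, after which norm continuity of the defect propagates it to a whole neighborhood.
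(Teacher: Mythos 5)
Since the paper itself does not prove this theorem --- it quotes it from \cite[Theorem 3.2]{Lee} --- there is no internal proof to compare against line by line; judged on its own terms, your outline follows the natural (and almost certainly the intended) route: a positive self-adjoint lift $F$ with defect $d=F-F^2\in C_0(X)\otimes B$, real rank zero of $M(B)$ used exactly once to lift the projection $\pi(F(x_0))\in M(B)/B$ to a projection $p_0\in M(B)$, the translation $\tilde F=F+(p_0-F(x_0))$ to create a spectral gap near $x_0$, functional calculus $f=h(\tilde F)$, and a chain-cover patching argument, which is legitimate precisely because $X$ is one-dimensional. Your treatment of strict continuity of $h(\tilde F)$ (uniform polynomial approximation on the gap-avoiding spectra plus joint strict continuity of multiplication on bounded sets) is also correct. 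There is, however, one genuine gap, and it sits at the central claim $f-\tilde F\in C(N)\otimes B$.

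Your justification --- that $h$ fixes the image of $\tilde F(x)$ in $M(B)/B$ because that image is a projection and $h(0)=0$, $h(1)=1$ --- proves only the \emph{pointwise} statement $f(x)-\tilde F(x)\in B$. Membership in $C(N)\otimes B$ also requires that $x\mapsto f(x)-\tilde F(x)$ be \emph{norm} continuous, and this does not follow from what you wrote: the difference is strictly continuous with values in $B$, but strictly continuous $B$-valued functions need not be norm continuous. (In $B=K(H)$, interpolate linearly between the diagonal rank-one projections $e_{nn}$ on the intervals $[\tfrac{1}{n+1},\tfrac1n]$ and set $g(0)=0$: since $e_{nn}\to 0$ strictly but $\|e_{nn}\|=1$, this $g$ is strictly continuous on $[0,1]$, $B$-valued, and not norm continuous.) Since producing norm-continuous local corrections is exactly the content of the theorem --- the coset description preceding it is formulated in terms of $C(X_i)\otimes B$ --- this step cannot be waved through; the same omission recurs at infinite endpoints, where you need $h(F)-F$ to vanish in norm. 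The repair is short and keeps your structure: on the gap-avoiding set $S$ one can write $h(t)-t=(t-t^2)k(t)$ with $k$ continuous on $S$ (take $k(t)=-1/(1-t)$ on the lower piece of $S$ and $k(t)=1/t$ on the upper piece), whence
\begin{equation*}
f-\tilde F \;=\; h(\tilde F)-\tilde F \;=\; \bigl(\tilde F-\tilde F^2\bigr)\,k(\tilde F) \;=\; \tilde d\,k(\tilde F),
\end{equation*}
where $\tilde d=\tilde F-\tilde F^2$ is norm continuous into $B$ and $k(\tilde F)$ is bounded and strictly continuous by your own approximation argument. A product of a norm-continuous $B$-valued function with a bounded strictly continuous function is norm continuous into $B$, so $f-\tilde F\in C(N)\otimes B$; and near an infinite endpoint the same identity gives $\|f(x)-F(x)\|\le\|d(x)\|\,\sup_{t\in S}|k(t)|\to 0$. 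With this factorization inserted, your proof is complete.
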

\begin{rem}
The theorem says that any projection $\mathbf{f}$ in the corona algebra of $C(X)\otimes B$ for some $C\sp{*}$-algebras $B$ can be viewed as a ``locally trivial fiber bundle'' with the Hilbert modules as fibers in the sense of Dixmier and Duady \cite{DixDua}.
\end{rem}
The following theorem will be used as one of our technical tools.
\begin{thm}\cite[Theorem 3.3]{Lee}\label{T:lifting}
Let $I$ be $C(X)\otimes B$ where $B$ is a $\sigma$-unital, non-unital, purely infinite simple $C\sp{*}$-algebra such that $M(B)$  has real rank zero or $K_1(B)=0$ (See \cite{Zh}).
Let a projection $\mathbf{f}$ in $M(I)/I$ be represented by $(f_1,f_2,\cdots, f_n)$, where $f_i$ is a projection valued function in
$C(X_i)\otimes M(B)_s$ for each $i$, as in Theorem \ref{T:locallift}.
If $k_i=[f_i(x_i):f_{i-1}(x_i)]=0$ for all $i$, then the projection $\mathbf{f}$ in $M(I)/I$ lifts.
\end{thm}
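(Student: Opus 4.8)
The plan is to reduce the global lifting problem to an \emph{exact} matching problem at the finitely many partition points, and to resolve each matching by a local conjugation supported on a collar. A lift of $\mathbf{f}$ to $M(I)=C_b(X,M(B)_s)$ is a single strictly continuous projection-valued function $F$ with $F-f_i\in C(X_i)\otimes B$ on each $X_i$ (vanishing at any infinite end). Since the pieces $f_i$ are already projections on the $X_i$, the only obstruction is that the right-hand value $f_{i-1}(x_i)$ and the left-hand value $f_i(x_i)$ at a partition point $x_i$ need not coincide; they merely satisfy $f_i(x_i)-f_{i-1}(x_i)\in B$. Thus it suffices to produce, for each $i$, a projection $g_i$ with $g_i-f_i\in C(X_i)\otimes B$ and with $g_{i-1}(x_i)=g_i(x_i)$ \emph{exactly}; then setting $F|_{X_i}:=g_i$ glues the pieces into the desired lift.

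First, at each $x_i$ put $p=f_{i-1}(x_i)$ and $q=f_i(x_i)$, so that $p-q\in B$ and $[p:q]=k_i=0$ by hypothesis. When neither projection lies in $B$, Theorem \ref{T:BDF} provides a unitary $u_i\in 1+B$ with $u_i q u_i^{*}=p$. The remaining (degenerate) case, in which $p,q\in B$ and hence $[p]_0=[q]_0$ by Lemma \ref{L:properties}(1), is handled directly: two projections of equal $K_0$-class in the stable, purely infinite simple algebra $B$ are Murray--von Neumann equivalent, again yielding such a $u_i\in 1+B$.

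Next I would connect $u_i$ to the identity inside the unitary group of $1+B$. \textbf{This is the crucial step and the main obstacle.} A unitary of the form $1+b$ has trivial image in $\widetilde{B}/B=\mb{C}$, so it lies in the identity component $U_0(1+B)$ precisely when the relevant $K_1$-obstruction vanishes; this is exactly what the hypothesis supplies, since when $K_1(B)=0$ it is immediate, and the real-rank-zero condition on $M(B)$ forces the same conclusion for these $B$ (see \cite{Zh}). Having fixed a norm-continuous path $u_i(s)\in 1+B$ with $u_i(0)=1$ and $u_i(1)=u_i$, choose a collar $[x_i,x_i+\delta]\subset X_i$ and define $g_i(x)=u_i(s(x))\,f_i(x)\,u_i(s(x))^{*}$ on the collar, where $s$ decreases from $1$ at $x_i$ to $0$ at $x_i+\delta$, and $g_i=f_i$ elsewhere on $X_i$; shrinking $\delta$ keeps the collars pairwise disjoint. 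Then $g_i(x_i)=u_i q u_i^{*}=p=f_{i-1}(x_i)$, as required.

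Finally I would verify that each $g_i$ is a genuine local lift: it is pointwise a projection, it is strictly continuous (conjugation of the strictly continuous $f_i$ by a norm-continuous unitary path preserves strict continuity, since $\|f_i(x)\|\le 1$ and $f_i$ is strictly continuous), and, writing $u_i(s)=1+b_i(s)$ with $b_i(s)\in B$, the difference $g_i-f_i=b_i(s)f_i+f_i b_i(s)^{*}+b_i(s)f_i b_i(s)^{*}$ is a norm-continuous $B$-valued function, hence lies in $C(X_i)\otimes B$. Gluing the $g_i$ along the now exactly matching partition points produces a strictly continuous projection $F\in M(I)$ that lifts $\mathbf{f}$. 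In the circle case the same argument applies verbatim to the extra wrap-around matching point $x_0$, whose obstruction $k_0$ also vanishes by hypothesis, and disjointness of the finitely many collars ensures the construction closes up consistently.
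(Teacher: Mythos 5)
This theorem is imported from \cite[Theorem 3.3]{Lee} and the present paper gives no proof of it, so the comparison is necessarily with the argument of \cite{Lee} (and of \cite{BL} for $B=K$). Your outline is that argument: reduce to exact matching of the local lifts at the finitely many partition points, produce a matching unitary $u_i\in 1+B$ from the vanishing of the essential codimension via Theorem \ref{T:BDF}, connect $u_i$ to $1$ inside $U(1+B)$, and absorb the conjugation on a collar. The reduction, the collar construction, and the verification that $g_i-f_i\in C(X_i)\otimes B$ and that $g_i$ remains strictly continuous are all correct. Two steps are asserted rather than proved and should be filled in. First, the step you yourself flag as crucial: the statement that a unitary in $1+B$ lies in $U_0(1+B)$ exactly when its $K_1$-class vanishes is $K_1$-injectivity, which is not automatic for general $B$; here it rests on $B$ being purely infinite simple and stable (stability following from Zhang's dichotomy since $B$ is non-unital), and you additionally need Zhang's theorem from \cite{Zh} that for such $B$ real rank zero of $M(B)$ forces $K_1(B)=0$ --- note that the unitary handed to you by Theorem \ref{T:BDF} has no reason to have trivial $K_1$-class a priori, so you are really using that the two alternatives in the hypothesis both collapse to $K_1(B)=0$, after which every unitary in $1+B$ is connected to $1$. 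Second, in the degenerate case $p,q\in B$, Murray--von Neumann equivalence $v^*v=p$, $vv^*=q$ with $v\in B$ does not by itself yield a unitary in $1+B$ conjugating $q$ to $p$; you must also match the complements, e.g.\ by applying Theorem \ref{T:BDF} to $1-p$ and $1-q$ (which are full, properly infinite and satisfy $[1-p:1-q]=-[p:q]=0$) and adding the two partial isometries. Both gaps are repairable with standard citations, and with them your proof is complete and follows the same route as \cite{Lee}.
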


 Now we want to observe the necessary conditions for a projection in $\mc{C}(I)$ to lift.
 If $\mathbf{f}$ is liftable to a projection $g$ in $M(I)$,  we
 can use the same partition of $X$ so that $(g_0,\cdots,g_n)$ and $(f_0,
 \cdots, f_n)$ define the same element $\mathbf{f}$ where $g_i$ is the
 restriction of $g$ on $X_i$. Then, for each $i$, $[g_i(x):f_i(x)]$ is
 defined for all $x$. By Lemma \ref{L:homotopy}  this function must be constant on $X_i$
 since $g_i -f_i$ is norm continuous. So we can let $l_i= [g_i(x):f_i(x)]$. \\
 Since $g_i(x_i)=g_{i-1}(x_i)$, we have
 $[g_i(x_i):f_i(x_i)]+[f_i(x_i):f_{i-1}(x_i)]=[g_{i-1}(x_i):f_{i-1}(x_i)]$ by
 Lemma \ref{L:properties}-(3). In other words,
 \begin{equation}\label{E:eq1}
 l_i-l_{i-1}=-k_i \quad \mbox{for} \quad i>0 \quad \mbox{and}\quad l_0-l_n=-k_0 \quad \mbox{in the circle case}.
 \end{equation}
 Moreover, if $(K_0(B),K_0(B)^{+})$ is an ordered group with the positive cone $K_0(B)^{+}=\{[p]_0\mid p \in \mc{P}_{\infty}(B)\}$, we apply Proposition \ref{P:restrictions} and Lemma \ref{L:unitaryequi} to
  projections $g_i(x)$ and $f_i(x)$, and we have the following restrictions on
  $l_i$ .
  \begin{itemize}
  \item[(i)] If for some $x$ in $X_i$, $f_i(x)$ belongs to $B$, then
             \begin{equation}\label{E:eq2}
         l_i \geq - [f_i(x)]_0,
         \end{equation}
  \item[(ii)] If  for some $x$ in $X_i$, $1-f_i(x)$ belongs to $B$, then
  \begin{equation}\label{E:eq3}
  l_i \leq [1-f_i(x)]_0,
  \end{equation}
  \item[(iii)] If either end point of $X_i$ is infinite, then
  \begin{equation}\label{E:eq4}
  l_i=0.
  \end{equation}
  \end{itemize}

 Are these necessary conditions sufficient?  Our strategy of showing the converse is to perturb the $f_i$'s so that $[f_i(x_i):f_{i-1(x_i}]$ vanishes for all i.
To do this we need to deform  a field of orthogonal projections by embedding a field a projections with arbitrary``rank'' information which is given by a K-theoretical term. This implies that a field of Hilbert modules defined by a field of projections should be ambient so that any field of submodules with arbitrary rank can be embedded. Unlike a Hilbert space there is no proper algebraic notion of rank of a multiplier projection whose image can be regarded as a Hilbert submodule of the standard Hilbert module $H_B$. However, at least we want to distinguish submodules in terms of finiteness and infiniteness of corresponding projections.
Recall that a projection  $p$ in a unital $C\sp*$-algebra $A$ is called a \emph{halving} projection if both $1-p$ and $p$ are Murray-von Neumann equivalent to the unit in $A$ and a projection $p$ in $A$ is called properly inifinite if there are mutually orthogonal projections $e$, $f$ in $A$ such that $e \le p$, $f \le p$, and $e \sim f \sim p$. Then it is easy to check that a projection in the multiplier algebra of a stable $C\sp*$-aglebra is Murray-von Neumann equivalent to $1$ if and only if it is full and properly infinite.  We denote by  $\mf{P}$ the set of all full and properly infinite projections in $M(B)$. We assume $M(B)$ has a halving projection and fix the halving projection $H$.

For the moment we allow $X$ to be any finite dimensional topological space.
Let $\mathbf{p}$ be a section which is a continuous map from $X$ to $M(B)$ with respect to the strict topology on $M(B)$. We denote its image on $x\in X$ by $p_x$. By the observation so far, it is natural to put the condition that $p_x$ is full and properly inifinte for every $x \in X$ to have an analogue of an infinite dimensinal Hilbert bundle or a continuous field of separable infinited dimensioanl Hilbert spaces.

We need the Michael selection theorem as the important step to get the main result Lemma \ref{L:deform}.
\begin{thm}(See \cite{Mich})\label{T:Selection}
Let $X$ be a paracompact finite dimensional topological space. Let $Y$ be a complete metric space and $S$ be a set-valued lower semicontinuous map from $X$ to closed subsets of $Y$, i.e. for each open subset $U$ of $Y$ the set $\{x \in X \mid S(x) \cap U \neq \emptyset \}$ is open. Let $\mathcal{R}$ be the range $\{S(x): x\in X\}$ of $S$. Then, if for some $m> \dim X$ we have
\begin{itemize}
\item[(i)] $S(x)$ is $m$-connected;
\item[(ii)] each $R \in \mathcal{R}$ has the property that every point $x \in R$ has an arbitrarily small neighborhood $V(x)$ such that $\pi_m(R^{'}\cap V(x))=\{e\}$ for every $R^{'} \in \mathcal{R}$;
\end{itemize}
then there exists a continuous map $s$ from $X$ to $Y$ such that $s(x)$ is in $S(x)$ for all $x \in X$.
\end{thm}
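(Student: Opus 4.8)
The plan is to obtain the selection as a uniform limit of continuous \emph{approximate selections}, following the standard strategy for finite-dimensional selection theorems. Concretely, I would construct continuous maps $s_k : X \to Y$ and a sequence $\epsilon_k \downarrow 0$ with $d(s_k(x), S(x)) < \epsilon_k$ for every $x$, arranged so that $d(s_k(x), s_{k+1}(x))$ is bounded by a summable sequence. Completeness of $Y$ then yields a continuous pointwise limit $s(x) = \lim_k s_k(x)$, and since each $S(x)$ is closed with $d(s(x), S(x)) = 0$, the map $s$ is a genuine selection. Everything therefore reduces to two tasks: producing a single continuous $\epsilon$-approximate selection, and passing from $s_k$ to a sufficiently close $s_{k+1}$.

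For the first task I would exploit lower semicontinuity and paracompactness. Lower semicontinuity says that $\{x : S(x) \cap U \neq \emptyset\}$ is open for open $U$, so for each $x$ I can choose $y \in S(x)$ and a neighborhood on which $y$ stays within $\epsilon$ of the values; paracompactness lets me extract a locally finite open cover $\{U_\alpha\}$ with chosen points $x_\alpha \in U_\alpha$ and $y_\alpha \in S(x_\alpha)$. Since $X$ is finite-dimensional I may take this cover to have order at most $\dim X + 1$, so its nerve $N$ is a simplicial complex of dimension at most $\dim X < m$. I would then build a continuous map $g : N \to Y$ and set $s = g \circ \kappa$, where $\kappa : X \to N$ is the canonical map furnished by a partition of unity subordinate to $\{U_\alpha\}$.

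The map $g$ is constructed by induction on the skeleta of $N$, sending each vertex to its $y_\alpha$ and extending one dimension at a time; this is exactly where the connectivity hypotheses enter and is the crux of the argument. Extending $g$ across a $k$-simplex $\sigma$ amounts to filling an already-defined boundary map $\partial\Delta^k \cong S^{k-1} \to Y$ over the solid simplex while keeping its image inside a prescribed small set $R' \cap V(x)$ of the type supplied by hypothesis (ii). Such a filling exists precisely because that set is $(k-1)$-connected: the global $m$-connectedness in (i) controls the values $S(x)$ themselves, while the uniform local $\pi_m$-triviality in (ii) keeps the successive extensions close to $S$ as the simplex dimension increases. Running this up through the top dimension of $N$, with one further dimension of slack available for the homotopies below, is what forces the gap $m > \dim X$.

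Finally, to obtain the Cauchy condition I would repeat the skeletal construction over a common refinement of the covers used for $s_k$ and $s_{k+1}$, invoking the connectivity once more to homotope the two approximate selections across a thin tube that stays near $S$, thereby bounding $d(s_k, s_{k+1})$ by the prescribed summable amount. I expect the main obstacle to be not any single homotopy-theoretic extension but the quantitative bookkeeping interlocked with it: I must choose the mesh of each cover and the neighborhoods $V(x)$ of condition (ii) finely enough that, simultaneously at every skeletal stage and every index $k$, all the filling-ins and connecting homotopies remain within the shrinking tolerances $\epsilon_k$. Managing these uniform estimates alongside the obstruction-theoretic induction is the heart of the proof.
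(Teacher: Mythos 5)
The paper does not prove this theorem at all: it is imported verbatim from Michael's \emph{Continuous selections I, II, III} with the citation \cite{Mich}, so there is no in-paper argument to compare yours against. Your outline is, in structure, exactly Michael's finite-dimensional selection argument: locally finite refinements of order at most $\dim X+1$, an approximate selection built by skeletal induction over the nerve (with the equi-local connectivity of condition (ii) supplying the fillings of $\partial\Delta^k$ and the global connectivity of condition (i) letting the induction run over all of $X$ rather than merely extending a selection from a closed subset), and a Cauchy sequence of approximate selections converging by completeness to a genuine selection of the closed sets $S(x)$. So you have identified the right proof.

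That said, as a proof it is only a scaffold. The two components you reduce everything to --- producing one continuous $\epsilon$-approximate selection whose skeletal extensions stay within prescribed neighborhoods, and showing that $s_{k+1}$ can be chosen uniformly close to $s_k$ --- are precisely where all the work in Michael's papers lies, and you explicitly defer them (``the quantitative bookkeeping \ldots is the heart of the proof''). Two points deserve flagging. First, for the skeletal induction you need null-homotopies of $k$-spheres for \emph{every} $k\le\dim X$, i.e.\ the equi-$LC^m$ condition that $\pi_k(R'\cap V(x))$ vanishes for all $k\le m$, not only the single group $\pi_m$ as condition (ii) literally states; this is a defect of the theorem as transcribed in the paper rather than of your argument, but your proof silently uses the stronger hypothesis. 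Second, the homotopy connecting $s_k$ to $s_{k+1}$ ``across a thin tube near $S$'' is not a one-line consequence of connectivity: it requires rerunning the skeletal construction on $X\times[0,1]$ (raising the relevant dimension by one, which is part of why the slack $m>\dim X$ is needed) with neighborhoods chosen before either $s_k$ or $s_{k+1}$ is fixed. These are fillable gaps --- the argument is in the literature --- but they are genuine gaps in what you have written.
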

Here, $\pi_m$ is the $m^{th}$ homotopy group, defined by $\pi_m(Z)=[S^m, Z]$.

The following lemma was actually proved in \cite{KuNg} under the assumption that $p_x$ is halving for every $x$, but this assumption can be weaken keeping the same proof \cite{Ng}.
\begin{lem}\label{L:deform}
Let $B$ be a $\sigma_p$-unital, stable $C\sp*$-algebra and $\mathbf{p}$ be a section which is a map from a compact, finite dimensional topological space  $X$ to $M(B)$ with respect to the strict topology on $M(B)$. Suppose $p_x$  is a full, properly infinite projection in $M(B)$ for each $x$.
Then there exists a continuous map $u:X \to \mf{M}$ such that $u^*_x u_x=p_x$, $u_xu^*_x=H \in M(B)$, where $\mf{W}$ is the set of all partial isometries with the initial projection in $\mf{P}$ and range projection H.
\end{lem}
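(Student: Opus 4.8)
The plan is to realize $u$ as a continuous selection of an auxiliary set-valued map and to apply the Michael selection theorem (Theorem \ref{T:Selection}). Fix the halving projection $H$ and recall the observation made just before the lemma: since $B$ is stable, a projection in $M(B)$ is Murray--von Neumann equivalent to $1$ exactly when it is full and properly infinite. In particular each $p_x$ is equivalent to $1$, and so is $H$ (as $H$ is halving), whence $p_x\sim H$ for every $x$. Let $Y$ be the set of partial isometries $w\in M(B)$ with $ww^*=H$, equipped with the strict topology. Because $B$ is $\sigma_p$-unital and every $w\in Y$ satisfies $\|w\|\le 1$, the unit ball of $M(B)$ is complete and metrizable in the strict topology, and $Y$ is a strictly closed subset of it, so $Y$ is a complete metric space. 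I then set
\[ S(x)=\{\,w\in Y : w^*w=p_x\,\}. \]
By $p_x\sim H$ each $S(x)$ is nonempty, and one checks it is strictly closed in $Y$. A continuous selection $s$ of $S$ is precisely a map with $s(x)^*s(x)=p_x$ and $s(x)s(x)^*=H$, i.e.\ the desired $u$, so it remains to verify the hypotheses of Theorem \ref{T:Selection}.

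First I would describe the fibres. Fixing $w_0\in S(x)$, the assignment $w\mapsto w_0^*w$ identifies $S(x)$ with the unitary group $U(p_xM(B)p_x)$ of the corner: a short computation gives $(w_0^*w)^*(w_0^*w)=(w_0^*w)(w_0^*w)^*=p_x$, and both this map and its inverse $u\mapsto w_0u$ are left multiplications by a fixed multiplier, hence strictly continuous. Since $p_x\sim 1$, the corner $p_xM(B)p_x$ is isomorphic to $M(B)$ (the isomorphism being a strict homeomorphism on unitary groups), so every fibre $S(x)$ is strictly homeomorphic to the single group $U(M(B))$. By the Kuiper-type theorem for the multiplier algebra of a stable $\sigma$-unital $C^*$-algebra (Mingo; Cuntz--Higson), $U(M(B))$ is weakly contractible in the strict topology; hence each $S(x)$ is $m$-connected for every $m$, in particular for some $m>\dim X$, giving hypothesis (i). The same homogeneity of the fibres together with the local contractibility of $U(M(B))$ is what I would use to establish the equi-local-connectedness hypothesis (ii): each point of a fibre has arbitrarily small strict neighbourhoods meeting every fibre in a set with trivial $\pi_m$.

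The main obstacle is the lower semicontinuity of $S$, which must be verified in the strict topology, where $x\mapsto p_x$ is continuous but generally \emph{not} norm continuous. Concretely, given $x_0$, a point $w_0\in S(x_0)$, and a basic strict neighbourhood determined by finitely many $b_1,\dots,b_k\in B$ and $\varepsilon>0$, I must produce for all $x$ near $x_0$ a partial isometry $w\in S(x)$ with $\|(w-w_0)b_j\|$ and $\|b_j(w-w_0)\|$ small. The idea is to transport $w_0$ along the strictly convergent family $p_x\to p_{x_0}$: one builds an approximate intertwiner (a polar-type correction of $w_0p_x$, say) and uses that $(p_x-p_{x_0})b_j\to 0$ in norm to control the relevant seminorms, then adjusts to land exactly in $S(x)$. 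Making this transport well defined and verifying the seminorm estimates is the delicate analytic point; it is exactly the construction carried out in \cite{KuNg}, with the halving hypothesis there relaxed to the present full, properly infinite hypothesis in \cite{Ng}.

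With nonemptiness, closedness, lower semicontinuity, and conditions (i) and (ii) in place, Theorem \ref{T:Selection} yields a continuous map $s:X\to Y$ with $s(x)\in S(x)$ for all $x$. Setting $u_x=s(x)$ then gives the required continuous map into $\mathfrak{W}$ with $u_x^*u_x=p_x$ and $u_xu_x^*=H$, completing the proof.
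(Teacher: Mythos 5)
Your proposal follows essentially the same route as the paper: both set up the fibre map $x \mapsto \{\text{partial isometries from } p_x \text{ to } H\}$ (the paper phrases it as $S(x)=F_H^{-1}(p_x)$ for $F_H(v)=v^*Hv$ on a strictly complete bounded ball), verify nonemptiness via $p_x\sim 1\sim H$, and invoke the Michael selection theorem, deferring the contractibility/closedness of the fibres and the lower semicontinuity to Kucerovsky--Ng and Ng's relaxation of the halving hypothesis. Your identification of the fibres with the contractible unitary group $U(M(B))$ is exactly how those deferred facts are established, so the argument is correct and not materially different.
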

\begin{proof}
 We define a map $F_H: \mf{W} \to \mf{P}$ by $F_H(v)=v^*Hv$. Then it can be shown that $F_H^{-1}(p)$ is closed and contractible, and $F_H$ is an open mapping as in \cite{KuNg}. Let Y be norm closed ball in $M(B)$ of elements with the norm less than equal to 2. If we define a set-valued map $S:X \to 2^{Y}$ by $S(x)=F_H^{-1}(p_x)$, it can be shown that this map is lower semi-continuous using the openness of the map $F_H$ (see also \cite{KuNg}). Then we apply the Michael selection theorem to the map S so that there is a cross section $s:X \to Y$ such that $s(x)\in S(x)$. Then we define $u_x=Hs(x)$. Since $F_H(s(x))=p_x$, it follows that $u_xu^*_x=H$ and $u^*_xu_x=p_x$.
\end{proof}

 Recall that a closed submodule $E$ of the Hilbert module $F$ over $B$ is complementable if and only if there is a submodule $G$ orthogonal to $E$ such that $E\oplus G =F$. The Kasparove stabilization theorem says that a countably generated closed submodule of $H_B$ is complementable whence it is the image of a projection in $\mathcal{L}(H_B)$. Let $\mathfrak{F}=((F_t)_{\{t\in T\}},\Gamma)$ be a continuous field of Hilbert modules.  A continuous field of Hilbert modules $((E_x)_{\{x\in X\}},\Gamma')$ is said to be complementable to $\mathfrak{F}$ when $E_x$ is a complementable submodule of $F_x$ for each $x \in X$. Then the following is a geometrical interpretation of Lemma \ref{L:deform}.

 \begin{prop}
 A complementable subfield $((E_x)_{\{x\in X\}},\Gamma')$ of the constant module $((H_B)_{\{x\in X\}}, \Gamma)$, where $\Gamma$ consists of the (norm) continuous section from $X$ to $H_B$, is in one to one correspondence to a continuous projection-valued map $p: X \to \mathcal{L}(H_B)$, where the latter is equipped with the $*$-strong topology (In general, we say that $\{T_i\}$ in $\mathcal{L}(X)$ converges to $T$ $*$-strongly if and only if both $T_i(x) \to T(x)$ and $T_i^*(x) \to T^*(x)$ in $X$ for all $x\in X$).
 \end{prop}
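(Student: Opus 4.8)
The plan is to exhibit the two constructions and check that they are mutually inverse, with the only real work lying in the continuity statements. In the direction sending a projection to a subfield, I start from a $*$-strongly continuous projection-valued map $p\colon X\to\mathcal{L}(H_B)$, set $E_x=p(x)H_B$ (a complementable submodule by the Kasparov stabilization theorem), and take as continuity structure $\Gamma'=\{\,x\mapsto p(x)\eta(x):\eta\in\Gamma\,\}$. The one estimate that drives this direction is that for $\eta\in\Gamma$ the section $x\mapsto p(x)\eta(x)$ is norm continuous: one bounds $\|p(x)\eta(x)-p(x_0)\eta(x_0)\|\le\|\eta(x)-\eta(x_0)\|+\|(p(x)-p(x_0))\eta(x_0)\|$, where the first term vanishes by norm continuity of $\eta$ and the second by $*$-strong (equivalently, since each $p(x)$ is self-adjoint, strong) continuity of $p$ at the fixed vector $\eta(x_0)$. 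Granting this, the field axioms are automatic: $x\mapsto\|p(x)\eta(x)\|$ is continuous, $\Gamma'$ is a $C(X)$-module and is locally uniformly closed, and because the constant sections $\eta\equiv\zeta$ already lie in $\Gamma$ one has $\{\xi(x_0):\xi\in\Gamma'\}=p(x_0)H_B=E_{x_0}$, so the density requirement holds exactly.

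For the reverse assignment I start from a complementable subfield $((E_x),\Gamma')$; the stabilization theorem again produces, for each $x$, a unique projection $p(x)\in\mathcal{L}(H_B)$ with $p(x)H_B=E_x$, and the task is to show $p$ is $*$-strongly continuous, which for self-adjoint $p(x)$ reduces to norm continuity of $x\mapsto p(x)\eta$ for each fixed $\eta\in H_B$. The naive estimate only gives one inequality: approximating $p(x_0)\eta$ to within $\epsilon$ by $\xi(x_0)$ with $\xi\in\Gamma'$ and using continuity of $x\mapsto\|\eta-\xi(x)\|$ yields $\limsup_{x\to x_0}\mathrm{dist}(\eta,E_x)\le\mathrm{dist}(\eta,E_{x_0})$, i.e. upper semicontinuity of the defect, but not the matching lower bound. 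The clean way to close the gap is a splitting device using the orthogonal complement field $G_x:=(1-p(x))H_B$: assuming a continuity structure $\Gamma''$ on $(G_x)$ with fibers dense in each $G_{x_0}$, I choose $\xi\in\Gamma'$ and $\zeta\in\Gamma''$ with $\|\xi(x_0)-p(x_0)\eta\|<\epsilon$ and $\|\zeta(x_0)-(1-p(x_0))\eta\|<\epsilon$, so that $\xi+\zeta\in\Gamma$ approximates the constant section $\eta$ to within $2\epsilon$ near $x_0$; since $p(x)\xi(x)=\xi(x)$ and $p(x)\zeta(x)=0$, one gets $\|p(x)\eta-\xi(x)\|=\|p(x)(\eta-\xi(x)-\zeta(x))\|\le\|\eta-\xi(x)-\zeta(x)\|$, and combining with $\xi(x)\to\xi(x_0)\approx p(x_0)\eta$ forces $p(x)\eta\to p(x_0)\eta$ in norm.

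The main obstacle is thus precisely the input to that device, namely producing the complementary continuity structure $\Gamma''$: one must show that fiberwise complementability of $((E_x),\Gamma')$ inside the constant field forces the orthogonal complements to carry enough continuous sections to be dense in each fiber. I expect to obtain these sections from the Kasparov stabilization theorem applied locally, by selecting in a neighbourhood of each point a generating sequence of sections of the constant field and complementing it against $\Gamma'$, or, exploiting the low covering dimension of the spaces $X$ under consideration, by invoking the Hilbert-module analogue of the Dixmier--Douady complementation result for continuous fields. Once continuity of $p$ is in hand, that the two assignments are mutually inverse is routine: uniqueness of the projection onto a complementable submodule shows that passing from $p$ to $E_x=p(x)H_B$ and back returns $p$, while in the other order local uniform closedness together with fiber density identifies $\Gamma'$ with $\{\xi\in\Gamma:\xi(x)\in E_x\ \forall x\}$, which is exactly the structure built from $p$, establishing the bijection.
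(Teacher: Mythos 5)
Your forward direction (from a $*$-strongly continuous $p$ to a subfield with $\Gamma'=\{x\mapsto p(x)\eta(x)\}$) is the same as the paper's and is correct. The reverse direction is where the substance lies, and you have located the difficulty exactly: from $\Gamma'$ alone you only get upper semicontinuity of $x\mapsto\mathrm{dist}(\eta,E_x)$, and to upgrade this to norm continuity of $x\mapsto p(x)\eta$ you need a complementary continuity structure $\Gamma''$ on $G_x=(1-p(x))H_B$ with dense fibers. But your write-up stops at ``I expect to obtain these sections from\dots'', and that step is not fillable from the hypotheses as literally stated. With the paper's stated definition of complementable subfield (each $E_x$ is a complementable submodule of $H_B$, plus $\Gamma'\subseteq\Gamma$), the proposition is in fact false: fix a nonzero projection $e\in\mathcal{L}(H_B)$, take $X=[0,1]$, set $E_x=eH_B$ for $x\neq 0$ and $E_0=\{0\}$, and let $\Gamma'$ be generated by the sections $x\mapsto g(x)\eta$ with $g\in C(X)$ vanishing at $0$ and $\eta\in eH_B$. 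Every fiber is complementable, all the continuity-structure axioms hold, yet $p_x=e$ for $x\neq 0$ and $p_0=0$ is not strongly continuous at $0$. So the existence of $\Gamma''$ (equivalently, lower semicontinuity of the fibers, i.e.\ the Dixmier--Douady notion of a \emph{complemented} subfield in which the orthogonal complements also form a continuous subfield) is an additional hypothesis that must be built into the definition; it cannot be manufactured by local applications of Kasparov stabilization or by exploiting $\dim X\le 1$, because the obstruction in the example above is not topological.

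For comparison, the paper's own proof of this direction simply asserts that for a complemented subfield $x\mapsto p_x(\gamma(x))$ is continuous for every $\gamma\in\Gamma$ and that $\Gamma'=\{x\mapsto p_x(\gamma(x))\}$ --- which is precisely the claim at issue --- so it is implicitly using the stronger notion of complementedness. Your proposal is more candid about where the work is, and your splitting estimate $\|p(x)\eta-\xi(x)\|\le\|\eta-\xi(x)-\zeta(x)\|$ is the right way to finish once $\Gamma''$ is available; but as written the decisive step is missing, and it should be repaired by strengthening the hypothesis (or the definition of ``complementable subfield'') rather than by the constructions you sketch. The closing argument that the two assignments are mutually inverse also tacitly assumes $\Gamma'$ is the maximal continuity structure with fibers $E_x$; that convention should be made explicit.
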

 \begin{proof}
 Since $E_x$ is a complementable submodule of $H_B$, it is an image of a projection $p_x \in \mathcal{L}(H_B)$. This it defines a map $p:X \to \mathcal{L}(H_B)$. Note that in general when $\mathcal{H}=((H_x)_{x\in X}, \Gamma)$ is a continuous field of Hilbert modules and $((E_x)_{x\in X}, \Gamma^{'})$ is a complemented subfield of $\mathcal{H}$,  $x \mapsto p_x(\gamma(x))$ is continuous if and only if $ x \mapsto \|p_x(\gamma(x))\|$ is continuous for $\gamma \in \Gamma$.  In addition, $\Gamma^{'}=\{x \mapsto p_x(\gamma(x)) \,\, \text{ for $\gamma \in \Gamma$}\}$. So if $\mathcal{H}$ is a trivial field, the map $x \mapsto p_x(\xi)$ for $\xi \in H_B$  is in $\Gamma^{'}$, and thus continuous. This implies that the map $x \to p_x$ is strongly continuous.
 Conversely, suppose that we are given a strongly continuous map $x \to p_x \in \mathcal{L}(H_B)$. Let $E_x=p_x(H_B)$, and define a section $\gamma_{\xi}:x \to p_x(\xi)$ for each $\xi \in H_B$. Let $\Lambda=\{\gamma_{\xi} \in \prod_{x\in X} E_X \mid \xi \in H_B\}$ and $\Gamma^{''}=\{\gamma \in \prod_{x}E_x \mid \gamma \, \, \text{satisfies ($*$)}\}$.\\

 ($*$) For any $ x \in X$ and $\epsilon >0$, there exists $\gamma^{'} \in \bar{\Lambda}$ such that $\| \gamma(x)- \gamma^{'}(x)\| \le \epsilon$ in a neighborhood of $x$.\\

 Then we can check that $((E_x)_{x\in X},\Gamma^{''})$ is a complemented subfield of a trivial field.

 \end{proof}
Thus we have an analogue of well-known Dixmier's triviality theorem on continuous fields of Hilbert spaces in the Hilbert module setting. We note that the strict topology on $M(B\otimes K) \simeq \mathcal{L}(H_B)$ coincides with the $*$-strong toplogy on bounded sets (see \cite[Proposition C.7]{RaeWi}).
\begin{cor}\label{C:triviality}
Let $B$ be a stable $C\sp*$-algebra and X a finite dimensional compact Hausdorff space. Then a complementable subfield of Hilbert modules associated with a projection-valued map $p:X \to M(B)_s$ is isomorphic to a trivial field provided that each $p_x$ is a full, properly infinite projection in $M(B)$.
\end{cor}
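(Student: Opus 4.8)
The plan is to extract the trivialization directly from Lemma~\ref{L:deform}. By the proposition preceding the corollary, the complementable subfield is the field of fibers $E_x = p_x(H_B)$ of a projection-valued map $p$ that is continuous for the $*$-strong topology on $\mathcal{L}(H_B)=M(B\ot K)$. Since $B$ is stable we may identify $\mathcal{L}(H_B)$ with $M(B)$, and since every $p_x$ has norm at most $1$, the cited coincidence of the strict and $*$-strong topologies on bounded sets lets me regard $p$ as a strictly continuous section $X\to M(B)_s$ whose values are full, properly infinite projections. These are exactly the hypotheses of Lemma~\ref{L:deform}.

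Applying Lemma~\ref{L:deform} therefore produces a strictly continuous field $x\mapsto u_x\in\mf{W}$ of partial isometries with $u_x^*u_x=p_x$ and $u_xu_x^*=H$, where $H$ is the fixed halving projection of $M(B)$. This already identifies $(E_x)_{x\in X}$ with the constant field $x\mapsto H(H_B)$, so the only remaining task is to replace the range projection $H$ by $1$. Since $H$ is halving it is Murray--von Neumann equivalent to $1$ in $M(B)=\mathcal{L}(H_B)$; fixing a partial isometry $w$ with $w^*w=H$ and $ww^*=1$ and setting $v_x=wu_x$, one computes $v_x^*v_x=u_x^*Hu_x=p_x$ (using $Hu_x=u_xu_x^*u_x=u_x$) and $v_xv_x^*=wHw^*=(ww^*)(ww^*)=1$. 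Thus $x\mapsto v_x$ is a strictly continuous field of partial isometries with initial projection $p_x$ and final projection $1$.

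It remains to see that $v$ implements an isomorphism of continuous fields from $(E_x)_{x\in X}$ onto the trivial field $((H_B)_{x\in X},\Gamma)$. For a section $\gamma'$ of the subfield, written $\gamma'(x)=p_x\gamma(x)$ with $\gamma\in\Gamma$, the assignment $\gamma'\mapsto(x\mapsto v_x\gamma'(x)=v_x\gamma(x))$ lands in $\Gamma$, while $v_x^*$ gives the inverse on fibers; because both $v$ and $v^*$ are strictly, hence $*$-strongly, continuous on the bounded set $\mf{W}$, they carry continuous sections to continuous sections, so the fiberwise unitaries $v_x\colon E_x\to H_B$ assemble into a field isomorphism. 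I expect no genuine obstacle here: the entire analytic difficulty---the construction of the continuous trivializing partial isometry via the Michael selection theorem---is already packaged in Lemma~\ref{L:deform}, and what remains is the bookkeeping between the strict and $*$-strong topologies together with the formal passage from the halving projection $H$ to $1$.
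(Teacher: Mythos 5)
Your proposal is correct and follows essentially the same route as the paper: both reduce the corollary to Lemma~\ref{L:deform}, obtaining a continuous family of partial isometries $u_x$ with $u_x^*u_x=p_x$ and $u_xu_x^*=H$, which shows $p$ is Murray--von Neumann equivalent to $1_{M(C(X)\otimes B)}$ and hence that the subfield is trivial. You merely make explicit (via the composition $v_x=wu_x$ with a partial isometry $w$ from $H$ to $1$, and the check that $*$-strong continuity carries continuous sections to continuous sections) what the paper compresses into the statement that a full, properly infinite projection in the multiplier algebra of a stable $C\sp*$-algebra is equivalent to $1$.
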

\begin{proof}
 Lemma \ref{L:deform} says that $p \in M(C(X)\otimes B)$ is globally full, and properly infinite if $p_x$ is full and properly infinite for each $x$ in $X$. Thus $p$ is Murray-von Neumann equivalent to $1_{M(C(X)\otimes B)}$.
\end{proof}
\begin{lem}
Under the same hypothesis on $B$ and $X$ as in Lemma \ref{C:triviality} let $\mathbf{p}$ be a section which is a map from $X$ to $M(B)$ with respect to the strict topology on $M(B)$.
Suppose that there exist a continuous map $u:X \to \mf{P}$ such that $u^*_x u_x=p_x$, $u_xu^*_x=H \in M(B)$. In addition, given an $\alpha \in K_0(B)$ we suppose that there exist a projection $q \in HBH \subset B$ such that $[q]=\alpha \in K_0(B)$. Then there exists a norm continuous section $\mathbf{r}$ from $X$ to $B$ such that $r_x \leq p_x$ such that $[r]_{K_0(B)}=\alpha$.
\end{lem}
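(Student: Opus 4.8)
The plan is to build the section $\mathbf{r}$ by transporting the fixed projection $q$ back through the given partial isometries. Concretely, I would set
\[
r_x \;=\; u_x^{*}\, q\, u_x \qquad (x \in X),
\]
and verify the three required properties: that each $r_x$ is a projection in $B$ lying under $p_x$, that its $K_0$-class is $\alpha$, and that the assignment $x \mapsto r_x$ is norm continuous. The algebraic checks are fiberwise and use only the relations $u_x^{*}u_x = p_x$, $u_x u_x^{*} = H$, and the fact that $q \in HBH$ forces $Hq = qH = q$. Indeed, $r_x^{*} = r_x$ since $q = q^{*}$, and $r_x^{2} = u_x^{*} q\,(u_x u_x^{*})\,q\,u_x = u_x^{*} q H q\, u_x = u_x^{*} q u_x = r_x$, so $r_x$ is a projection; it belongs to $B$ because $q \in B$ and $B$ is an ideal of $M(B)$. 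Using $p_x = u_x^{*}u_x$ together with $u_x u_x^{*} = H$ and $Hq = q$ one computes $p_x r_x = u_x^{*}(u_x u_x^{*}) q u_x = u_x^{*} H q u_x = u_x^{*} q u_x = r_x$, and likewise $r_x p_x = r_x$, whence $r_x \le p_x$.

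For the $K$-theory I would exhibit an explicit Murray--von Neumann equivalence. Put $v_x = q u_x$; then $v_x^{*} v_x = u_x^{*} q^{2} u_x = u_x^{*} q u_x = r_x$ and $v_x v_x^{*} = q\,(u_x u_x^{*})\,q = q H q = q$, so $v_x$ is a partial isometry realizing $r_x \sim q$ in each fiber. Consequently $[r_x]_0 = [q]_0 = \alpha$ for every $x \in X$; since this value is independent of $x$ (and $X$ is connected in the applications), it is precisely the class $[r]_{K_0(B)} = \alpha$ obtained by evaluating the norm-continuous projection $\mathbf{r}$ at any point.

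The step I expect to be the main obstacle is the \emph{norm} continuity of $x \mapsto r_x$, because the map $u$ is continuous only for the strict topology on $M(B)$, not for the norm topology. The essential point is that conjugating by the fixed element $q$ of the ideal $B$ upgrades strict continuity to norm continuity. Writing
\[
r_x - r_y \;=\; u_x^{*}\, q\,(u_x - u_y) \;+\; (u_x^{*} - u_y^{*})\, q\, u_y ,
\]
and using $\|u_x\| \le 1$, I would bound the first summand by $\|q u_x - q u_y\|$ and, since $q = q^{*}$ gives $u_x^{*} q = (q u_x)^{*}$, bound the second summand by $\|u_x^{*} q - u_y^{*} q\| = \|q u_x - q u_y\|$. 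Both are controlled by the norm continuity of $x \mapsto q u_x$, which is exactly the content of strict continuity of $u$ tested against $q \in B$. Thus $\|r_x - r_y\| \le 2\,\|q u_x - q u_y\| \to 0$ as $y \to x$, which yields the required norm continuity and completes the construction of $\mathbf{r}$.
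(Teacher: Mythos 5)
Your proof is correct and takes essentially the same route as the paper: set $r_x = u_x^{*} q u_x$, get norm continuity of $x \mapsto r_x$ from strict continuity of $u$ tested against $q \in B$, and realize $r_x \sim q$ via the partial isometry $qu_x$ (whose range projection is $qHq = q$), so $[r]_{K_0(B)} = [q] = \alpha$. Your write-up simply makes explicit some details the paper leaves implicit, namely the verification that $r_x$ is a projection with $r_x \le p_x$ and the quantitative bound $\|r_x - r_y\| \le 2\|qu_x - qu_y\|$.
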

\begin{proof}
Since $q \in B$, $x \to qu_x \in B $ is norm continuous so that $x \to r_x=(qu_x)^*qu_x=u_x^*qu_x$ is norm continuous. Note that $(qu_x)^*qu_x=qu_xu_x^*q=qHq=q$. Thus $[r]_{K_0(B)}=[r_x]=[q]=\alpha$.
\end{proof}
 In summary, we state what we need as a final form .
\begin{lem}\label{L:subprojection}
Let $\mathbf{p}$ be a section from $X$ to $M(B)$ with respect to the strict topology on $M(B)$  where  $B$ is a $\sigma_p$-unital, stable $C\sp*$-algebra of real rank zero such that $M(B)$ contains a halving full projection. In addition, assume that $p_x$ is a full, properly infinite projection for each $x$. Then for any $\alpha \in K_0(B)$ there exists a norm continuous section $\mathbf{r}$ from $X$ to $B$ such that  $r_x \leq p_x$ such that $[r]_{K_0(B)}=\alpha$.
\end{lem}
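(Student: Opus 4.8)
The plan is to deduce the statement by combining the two results that immediately precede it: Lemma \ref{L:deform}, which produces a continuous field of partial isometries trivializing $\mathbf{p}$, and the (unlabelled) lemma just above, which converts such a trivialization, together with a projection in the corner $HBH$ representing $\alpha$, into the desired norm-continuous subprojection field $\mathbf{r}$. Thus the entire content is to verify the two hypotheses of that preceding lemma from the hypotheses given here, so that it may be applied verbatim.

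First I would invoke Lemma \ref{L:deform}. Since $X$ is compact and finite dimensional, $B$ is $\sigma_p$-unital and stable, and each $p_x$ is full and properly infinite, there is a continuous map $u : X \to \mf{W}$ with $u^*_x u_x = p_x$ and $u_x u^*_x = H$. This is exactly the first hypothesis needed, so no further work is required at this stage.

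Second, I would produce a projection $q \in HBH$ with $[q]_{K_0(B)} = \alpha$. The key structural observation is that $H$ is a halving projection, so $H \sim 1_{M(B)}$; choosing a partial isometry $v \in M(B)$ with $v^*v = 1$ and $vv^* = H$, the map $b \mapsto vbv^*$ is a $*$-isomorphism $B \xrightarrow{\sim} HBH$ which induces the identity on $K_0$, because $vq_0v^* \sim q_0$ for every projection $q_0$ (the partial isometry being $vq_0$). Hence it suffices to find a projection $q_0 \in B$ with $[q_0]_{K_0(B)} = \alpha$ and set $q = vq_0v^* \in HBH$. Feeding $u$ and this $q$ into the preceding lemma then yields $\mathbf{r}$, with $r_x = u^*_x q u_x$; one checks readily that $r_x$ is a projection in $B$, that $r_x \le p_x$ (since $q \le H$ gives $u^*_x q u_x \le u^*_x H u_x = p_x$), that $x \mapsto r_x$ is norm continuous because $q \in B$, and that $[r]_{K_0(B)} = [q]_{K_0(B)} = \alpha$.

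The main obstacle is precisely this representability step: realizing an arbitrary class $\alpha \in K_0(B)$ by a single honest projection $q_0 \in B$, rather than by a formal difference of projections. Real rank zero guarantees an abundance of projections and that $K_0(B)$ is generated by classes of projections, but collapsing a difference $[e]-[f]$ into one projection requires the extra room supplied by the ambient infiniteness in which the lemma is actually used, where $K_0(B)^{+} = K_0(B)$ and hence every class is the class of a projection. I would therefore isolate this as the one point that genuinely needs the properly infinite (in the applications, purely infinite simple) structure of $B$; once it is in hand, the geometric part of the argument is carried entirely by Lemma \ref{L:deform} and the corner isomorphism $HBH \cong B$.
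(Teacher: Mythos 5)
Your proposal follows the paper's own proof essentially verbatim: both arguments reduce the statement to Lemma \ref{L:deform} combined with the unlabelled lemma immediately preceding it, the only remaining work being to produce a single projection $q\in HBH$ with $[q]_0=\alpha$. The one cosmetic difference is that the paper gets $K_0(HBH)\cong K_0(B)$ from Brown's stable isomorphism theorem for the full hereditary subalgebra $HBH$ and then invokes strong $K_0$-surjectivity, whereas you identify $HBH$ with $B$ directly via a partial isometry coming from $H\sim 1$; your observation that realizing an arbitrary class $\alpha$ by one honest projection is the step requiring more than real rank zero (and is supplied by the purely infinite simple setting in which the lemma is applied) is accurate, and is exactly the point the paper passes over with its appeal to ``strong $K_0$-surjectivity.''
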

\begin{proof}
If we denote a halving (strictly) full projection by $H$, $HBH$ is a full hereditary subalgebra of $B$ so that it is stably isomorphic to $B$ by \cite[Corollary 2.6]{Br}. Hence $K_0(HBH)=K_0(B)$. Since $B$ is a $C\sp*$-algebra of real rank zero, so is $HBH$ by \cite[Corollary 2.8]{BP}. This it satisfies the strong $K_0$-surjectivity, i.e. there exists a projection $q$ in $HBH$ such that $[q]_0=\alpha$. Then the conclusion follows from Lemma \ref{L:deform} and Lemma \ref{L:subprojection}.
\end{proof}

  Now we restrict ourselves to the case $I=C(X)\otimes B$ where $X$ is $[0,1]$, $[0,\infty)$, $(-\infty,\infty)$, or $[0,1]/\{0,1\}$, and $B$ is $\sigma_p$-unital, purely infinite simple $C\sp*$-algebra such that $M(B)$ has real rank zero and has a full halving projection $H$. From now on we assume that $K_0(B)$ is an odered abelian group and drop $0$ in the expression of an element in $K_0(B)$. Note that $B$ is a stable $C\sp*$-algebra by Zhang's diachotomy \cite{Zh}. Also, it satisfies the strong $K_0$-surjectivity \cite{Lin96}. Thus we can apply Lemma \ref{L:subprojection} to (one dimensional) closed intervals $X_i$'s, which come from a partition of $X$ associated with a local representation of a projection in the corona algebr of $I$.
\begin{thm}\label{T:liftingthm}
  A projection $\mathbf{f}$ in $\mathcal{C}(I)$ represented by
  $(f_0,\cdots, f_n)$ is liftable to a projection in $M(I)$  where $f_i(x)$'s  are halving projections for all $i$ and $x\in X$ if and only if there
  exist $l_0,\cdots,l_n$ satisfying above conditions
  (\ref{E:eq1}), (\ref{E:eq2}), (\ref{E:eq3}), (\ref{E:eq4}).
  \end{thm}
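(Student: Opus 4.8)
The plan is to prove the theorem by establishing the two directions separately, with the forward (necessity) direction essentially already carried out in the discussion preceding the statement and the reverse (sufficiency) direction forming the substantial content. For necessity, I would simply collect the observations already made: if $\mathbf{f}$ lifts to a projection $g \in M(I)$, then writing $g_i$ for the restriction of $g$ to $X_i$ and setting $l_i = [g_i(x):f_i(x)]$, Lemma \ref{L:homotopy} guarantees this is well-defined (constant in $x$) since $g_i - f_i$ is norm continuous. The relation $l_i - l_{i-1} = -k_i$ in \eqref{E:eq1} follows from the cocycle property Lemma \ref{L:properties}-(3) together with $g_i(x_i) = g_{i-1}(x_i)$, and the inequality constraints \eqref{E:eq2}, \eqref{E:eq3} follow from Proposition \ref{P:restrictions} applied to the pair $g_i(x), f_i(x)$ whenever $f_i(x)$ or $1 - f_i(x)$ lands in $B$. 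The condition \eqref{E:eq4} at infinite endpoints follows from Lemma \ref{L:unitaryequi} since there $g_i - f_i$ vanishes in norm, forcing $\|g_i(x) - f_i(x)\| < 1$ eventually.

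For sufficiency, the strategy announced in the text is to \emph{perturb} the local representatives $f_i$ to new projection-valued functions $\tilde f_i$ so that the new essential codimensions $[\tilde f_i(x_i):\tilde f_{i-1}(x_i)]$ all vanish, at which point Theorem \ref{T:lifting} produces a genuine lift. First I would use the given data $l_0, \dots, l_n \in K_0(B)$ to prescribe the K-theoretic ``rank correction'' needed on each interval. Concretely, on each $X_i$ the hypothesis that $f_i(x)$ is a full, properly infinite (halving) projection for every $x$ lets me invoke Lemma \ref{L:subprojection}: for a target class determined by $l_i$ I can build a \emph{norm continuous} subprojection-valued section $r_i \leq f_i$ (or, dually, a subprojection of $1 - f_i$) with the prescribed $K_0$-class. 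The sign constraints \eqref{E:eq2} and \eqref{E:eq3} are precisely what is needed to ensure these subprojections exist and fit inside $f_i(x)$ (respectively $1 - f_i(x)$) even at the exceptional points where finiteness occurs, while \eqref{E:eq4} handles the infinite endpoints where the perturbation must be trivial.

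The heart of the argument is then to modify $f_i$ into $\tilde f_i$ by adding or removing such a subprojection so that the \emph{new} codimensions at each partition point become zero. Using the additivity Lemma \ref{L:properties}-(4) together with (1), adjusting $f_i(x_i)$ by a subprojection of $K_0$-class shifting it by $l_i$ changes $[\tilde f_i(x_i):\tilde f_{i-1}(x_i)]$ to $k_i + l_i - l_{i-1}$, which is exactly zero by the relation \eqref{E:eq1}. The point is that the new $\tilde f_i$ still represents the same coset $\mathbf{f}$ in $\mathcal{C}(I)$, because $\tilde f_i - f_i \in C(X_i)\otimes B$ (the correction is a norm continuous $B$-valued section), so lifting $\mathbf{f}$ via its $(\tilde f_i)$-representation suffices. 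Once all the adjusted codimensions vanish, Theorem \ref{T:lifting} applies directly and delivers the desired projection lift in $M(I)$.

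The main obstacle I anticipate is the bookkeeping at the exceptional and endpoint cases, where $f_i(x)$ or $1 - f_i(x)$ degenerates into $B$ and the projection ceases to be properly infinite at that point, so that Lemma \ref{L:subprojection} cannot be applied naively on all of $X_i$ and the perturbation must be arranged to respect the order constraints \eqref{E:eq2}--\eqref{E:eq4} throughout the whole interval rather than merely at the partition points. Ensuring the subprojection-valued correction is simultaneously norm continuous across $X_i$, compatible at \emph{both} endpoints $x_i$ and $x_{i+1}$ (where two adjacent corrections must be reconciled), and actually realizable inside $f_i$ as a genuine subprojection rather than just a Murray--von Neumann subequivalent class is where the real care is required; this is exactly the role played by the ``infinite rank and co-rank'' deformation machinery of Lemma \ref{L:subprojection} and the triviality Corollary \ref{C:triviality}.
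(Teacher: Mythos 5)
Your proposal is correct and follows essentially the same route as the paper: the necessity direction is exactly the discussion preceding the theorem, and for sufficiency the paper likewise perturbs each $f_i$ by adding a subprojection of $1-f_i$ (when $l_i>0$) or removing a subprojection of $f_i$ (when $l_i<0$) of prescribed $K_0$-class $\pm l_i$ via Lemma \ref{L:subprojection}, checks that the new codimensions $l_i+k_i-l_{i-1}$ vanish by \eqref{E:eq1}, and concludes with Theorem \ref{T:lifting}. Your anticipated difficulties at points where $f_i(x)$ or $1-f_i(x)$ falls into $B$ do not arise here, since the halving hypothesis on every $f_i(x)$ makes conditions \eqref{E:eq2} and \eqref{E:eq3} vacuous and lets Lemma \ref{L:subprojection} apply to both $f_i$ and $1-f_i$ on all of $X_i$.
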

 \begin{proof}
  Given $l_i$'s satisfying (\ref{E:eq1}), (\ref{E:eq2}), (\ref{E:eq3}),
  (\ref{E:eq4}),
  we will show there exist $g_0, \cdots, g_n$
  such that $[g_i(x_i):g_{i-1}(x_i)]=0$ for $i>0$ and $[g_0(x_0):g_n(x_0)]=0$ in the circle case.

  First observe that if we have $g_{i}$'s such that $l_i=[g_i(x_i):f_i(x_i)]$,
  we have $[g_i(x_i):g_{i-1}(x_i)]=0 $ by (\ref{E:eq1}). Thus it is enough to
  show that there exist $g_0, \cdots, g_n$
  such that $[g_i(x_i):f_{i}(x_i)]=l_i$.
  \begin{itemize}
  \item[$l_i=0$]: Take $g_{i}=f_{i}$.
  \item[$l_i>0$]: By Lemma \ref{L:subprojection} the continuous field determined by $1-f_{i}$ has a
  trivial subfield which is given by a projection valued function
  $q \leq 1-f_i$ such that $[q(x)]_0=l_i$.  So we take $g_i=f_i+q$.
  \item[$l_i<0$]: Similarly, the continuous field determined by $f_i$ has a
   trivial subfield which is given by a projection valued function $q' \leq
  f_i$ such that $[q'(x)]_0=-l_i$. So we take $g_i=f_i-q'$.
 \end{itemize}
  Then the conclusion follows from Theorem \ref{T:lifting}.
  \end{proof}

Then we want to investigate some equivalence relations for projections using above arguments.  As before, let $\mathbf{p}$ and $\mathbf{q}$ be two projections in $\mathcal{C}(I)$.
\begin{lem}\label{L:onetoone}
     Let $(p_0, \cdots,
    p_n)$ and $(q_0, \cdots, q_n)$ be local liftings of $\mathbf{p}$
    and $\mathbf{q}$ such that $q_i(x)$ is a halving projection
    for each $x$ in $X_i$.

    If $ \sum_{i=1}^n[p_i(x_i):p_{i-1}(x_i)]=\sum_{i=1}^n
    [q_i(x_i):q_{i-1}(x_i)]$, or  $\sum_{i=1}^n[p_i(x_i):p_{i-1}(x_i)]+ [p_0(x_0):p_n(x_0)]=\sum_{i=1}^n
    [q_i(x_i):q_{i-1}(x_i)]+[q_0(x_0):q_n(x_0)]$ in the circle case, then we can find a perturbation  $(q_0', \cdots,
    q_n')$ of $\mathbf{q}$ such that $[p_i(x_i):p_{i-1}(x_i)]=
    [q_i'(x_i):q_{i-1}'(x_i)]$ for $i=1,\dots, n $ or $[p_i(x_i):p_{i-1}(x_i)]=
    [q_i'(x_i):q_{i-1}'(x_i)]$for $i=1,\dots, n+1$ modulo $n+1$.
     \end{lem}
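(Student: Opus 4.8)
The plan is to treat the essential codimensions as $K_0(B)$-valued ``charges'' sitting at the interior partition points $x_1,\dots,x_n$, and to transport charge between neighbouring points by modifying the local pieces $q_i$ on the intervening intervals. Write $k_i^{p}=[p_i(x_i):p_{i-1}(x_i)]$ and $k_i^{q}=[q_i(x_i):q_{i-1}(x_i)]$, and set $a_i=k_i^{p}-k_i^{q}$; the hypothesis is precisely that $\sum_i a_i=0$ (respectively the cyclic sum vanishes in the circle case).

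First I would record how a codimension changes under an allowable perturbation of a single piece. Fix an interior interval $X_i$, whose endpoints $x_i,x_{i+1}$ are both partition points. Because $q_i(x)$ is halving, both $q_i$ and $1-q_i$ are fields of full, properly infinite projections, so Lemma \ref{L:subprojection} furnishes norm-continuous projection-valued functions $r_i\le 1-q_i$ and $s_i\le q_i$ on $X_i$ with prescribed positive classes $[r_i]_0=\rho_i$ and $[s_i]_0=\sigma_i$. Put $q_i'=q_i+r_i-s_i$; since $r_i\perp s_i$ this is again a strictly continuous projection-valued function, and $q_i'-q_i=r_i-s_i\in C(X_i)\otimes B$, so replacing $q_i$ by $q_i'$ leaves the corona element $\mathbf{q}$ unchanged. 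Using the additivity properties of Lemma \ref{L:properties}, one computes $[q_i'(x):q_i(x)]=\rho_i-\sigma_i=:t_i$ at either endpoint, whence at the left endpoint $x_i$ (where $q_i$ is the first argument) the codimension increases by $t_i$, and at the right endpoint $x_{i+1}$ (where $q_i$ is the second argument) it decreases by $t_i$. Thus modifying $q_i$ effects the ``shift'' $t_i(e_i-e_{i+1})$ on the vector $(k_1^q,\dots,k_n^q)$ while leaving the total $\sum_i k_i^q$ fixed.

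The problem then reduces to solving, over $K_0(B)$, the telescoping system $t_i-t_{i-1}=a_i$ with boundary values $t_0=t_n=0$ (cyclically $t_i-t_{i-1}=a_i$ for $i$ mod $n+1$ in the circle case). Since $\sum_i a_i=0$, the choice $t_i=a_1+\cdots+a_i$ is consistent, and each $t_i\in K_0(B)$ can be written as a difference $\rho_i-\sigma_i$ of classes of honest projections because the positive cone generates $K_0(B)$. Carrying out the perturbation above on each interior interval $X_i$ with these $\rho_i,\sigma_i$ (and leaving $q_0,q_n$ untouched in the interval case) produces $(q_0',\dots,q_n')$. Applying additivity once more at each node, $[q_i'(x_i):q_{i-1}'(x_i)]=t_i+k_i^q-t_{i-1}=k_i^q+a_i=k_i^{p}$, which is the desired conclusion.

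The substantive input, Lemma \ref{L:subprojection}, is already in hand, so the remaining work is essentially bookkeeping. The one point requiring care — and the reason the halving hypothesis on the $q_i$ is needed rather than mere proper infiniteness — is that the transport amounts $t_i$ are arbitrary elements of $K_0(B)$ and may be neither positive nor negative; realizing such a $t_i$ forces us to simultaneously enlarge $q_i$ inside $1-q_i$ and shrink it inside $q_i$, which requires both $q_i$ and $1-q_i$ to be fields of full, properly infinite projections. I would also verify the orthogonality $r_i\perp s_i$ and the codimension-change formula carefully through Lemma \ref{L:properties}, and note that the circle case differs only in that all $n+1$ arcs are interior, so the cyclic shift vectors span the entire zero-sum subgroup of $K_0(B)^{n+1}$ and the same telescoping applies.
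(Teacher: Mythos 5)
Your argument is correct and is essentially the paper's own proof: both transport the discrepancies $d_i=k_i-l_i$ by perturbing the $q_i$ on the interior intervals by the telescoping partial sums $\sum_{k\le i}d_k$, realized as norm-continuous subprojection fields via Lemma \ref{L:subprojection}, while leaving $q_0$ and $q_n$ fixed (and arguing cyclically on the circle). The only cosmetic difference is that the paper realizes each (possibly non-positive) class by a single projection added inside $1-q_i$, using strong $K_0$-surjectivity of the purely infinite simple $B$, whereas you split it as a difference $\rho_i-\sigma_i$ and both add inside $1-q_i$ and subtract inside $q_i$.
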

     \begin{proof}
      Let $[p_i(x_i):p_{i-1}(x_i)]=k_i, [q_i(x_i):q_{i-1}(x_i)]=l_i$.
      If $d_i=k_i-l_i$, note that
      $$ \sum[p_i(x_i):p_{i-1}(x_i)]=\sum
    [q_i(x_i):q_{i-1}(x_i)] \quad \text{if and only if} \quad \sum d_i =0. $$
      Let $q_0'=q_0$. Suppose that we have constructed $q_0',
\cdots, q_{i}'$ such that $[p_j(x_j):p_{j-1}(x_j)]=
    [q_j'(x_j):q_{j-1}'(x_j)]$ for $j=1,\cdots,i$ and
    $[q_{i+1}(x_{i+1}):q_{i}'(x_{i+1})]=l_{i+1}-\sum_{k=1}^{i}
    d_k$. \\
    Let $\mathbf{r}$ be a projection valued (norm continuous) function on $X_{i+1}$ such that
    $r \leq 1-q_{i+1}$ and $[q]_{K_0(B)}=d_{i+1}+\sum_{k=1}^{i}
    d_k$. Then take $q_{i+1}'=q+q_{i+1}$. Then
    \begin{align*}
    [q_{i+1}'(x_{i+1}):q_{i}'(x_{i+1})]&=[q_{i+1}(x_{i+1}):q_{i}'(x_{i+1})]+[q(x_{i+1}):0] \\
                                              &=l_{i+1}-\sum_{k=1}^{i} d_k + d_{i+1}+\sum_{k=1}^{i} d_k\\
                                              &=l_{i+1}+k_{i+1}-l_{i+1}\\
                                              &=k_{i+1}
    \end{align*}
    \begin{align*}
    [q_{i+2}(x_{i+2}):q_{i+1}'(x_{i+2})]&=[q_{i+2}(x_{i+2}):q_{i+1}(x_{i+2})]+[0:q(x_{i+2})] \\
                                              &=l_{i+2} -( d_{i+1}+\sum_{k=1}^{i} d_k)\\
                                              &=l_{i+2}-\sum_{k=1}^{i+1} d_k
   \end{align*}
    By induction, we can get $q_{0}', \cdots, q_{n-1}'$ such
    that $[p_j(x_j):p_{j-1}(x_j)]=
    [q_j'(x_j):q_{j-1}'(x_j)]$ for $j=1,\cdots,n-1$
   as we want.
    Finally, since we also have $[q_{n}(x_{n}):q_{n-1}'(x_{n})]=l_{n}-\sum_{k=1}^{n-1}
    d_k=l_{n}+d_{n}=k_{n}$ from $\sum_{k=1}^{n-1}
    d_k+d_{n}=0$, we take $q_{n}'=q_{n}$.

    In the circle case, we perturb $q_n$ to $q'_n$ such that $[q_n'(x_n):q'_{n-1}(x_n)]=k_n$ and $[q_0(x_0):q'(x_0)]=l_0- \sum_{k=1}^n d^k=l_0+d_0=k_0$.
   \end{proof}
    Next is an analogous result that is more symmetrical.
\begin{lem}\label{L:samerank}
    Let $(p_0, \cdots,
    p_n)$ and $(q_0, \cdots, q_n)$ be local liftings of $\mathbf{p}$
    and $\mathbf{q}$ such that $p_i(x)$ and $q_i(x)$ are full, properly infinite projections
    for each $x$ in $X_i$.\\
    If $ \sum[p_i(x_i):p_{i-1}(x_i)]=\sum
    [q_i(x_i):q_{i-1}(x_i)]$, or  $[p_i(x_i):p_{i-1}(x_i)]=
    [q_i'(x_i):q_{i-1}'(x_i)]$for $i=1,\dots, n+1$ modulo $n+1$, then we can find  perturbations  $(q_0', \cdots,
    q_n')$ of $\mathbf{q}$ and $(p_0', \cdots,
    p_n')$ of $\mathbf{p}$ such that $[p_i'(x_i):p_{i-1}'(x_i)]=
    [q_i'(x_i):q_{i-1}'(x_i)]$ for all $i$.
   \end{lem}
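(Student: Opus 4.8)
The plan is to mimic the telescoping construction of Lemma \ref{L:onetoone}, but to perturb $\mathbf{p}$ and $\mathbf{q}$ simultaneously so as to dispense with the halving hypothesis used there. Write $k_i=[p_i(x_i):p_{i-1}(x_i)]$ and $l_i=[q_i(x_i):q_{i-1}(x_i)]$, and set $d_i=k_i-l_i$; the hypothesis is exactly $\sum_i d_i=0$ (with the seam term $d_0$ included in the circle case). The goal is to produce perturbations $p_i',q_i'$ with $[p_i'(x_i):p_{i-1}'(x_i)]-[q_i'(x_i):q_{i-1}'(x_i)]=0$ for every $i$. The basic move is this: since $p_i(x)$ is full and properly infinite on $X_i$, the subprojection Lemma \ref{L:subprojection} produces, for any class $\alpha\in K_0(B)^{+}$, a norm-continuous field $a_i\le p_i$ with $[a_i(x)]_0=\alpha$; replacing $p_i$ by $p_i-a_i$ keeps the same coset of $\mathbf{p}$ (the difference lies in $C(X_i)\otimes B$) and, by the additivity and transitivity of $[\,:\,]$ in Lemma \ref{L:properties}, it lowers $k_i$ by $\alpha$ at the left endpoint $x_i$ while raising $k_{i+1}$ by $\alpha$ at the right endpoint $x_{i+1}$. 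The same move applies to $q_i$. Thus each interval carries a conserved ``flux'' of codimension between its two endpoints.

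I would then run the induction from left to right. Set $p_0'=p_0$, $q_0'=q_0$, and suppose the codimensions at $x_1,\dots,x_{i-1}$ have already been matched; let $D_i$ denote the current discrepancy $[p_i(x_i):p_{i-1}'(x_i)]-[q_i(x_i):q_{i-1}'(x_i)]$ at $x_i$, which equals $d_1+\cdots+d_i$ by the conservation above. I choose positive classes $\alpha_i,\beta_i\in K_0(B)^{+}$ with $\alpha_i-\beta_i=D_i$, subtract $a_i\le p_i$ of class $\alpha_i$ and $b_i\le q_i$ of class $\beta_i$ on $X_i$, and set $p_i'=p_i-a_i$, $q_i'=q_i-b_i$. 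This equalizes the two codimensions at $x_i$ and pushes the residual $D_{i+1}=d_{i+1}+D_i$ forward. Running $i=1,\dots,n-1$ leaves $X_0$ and $X_n$ untouched; the final discrepancy at $x_n$ is $D_n=\sum_i d_i=0$ by hypothesis, so $x_n$ is matched automatically and no perturbation of the boundary intervals is needed. In the circle case one runs the same argument around the seam, where the vanishing of the full cyclic sum $\sum_{i=0}^n d_i$ provides the closure; for non-compact $X$ the infinite-end intervals play the role of the untouched boundary intervals, since a nonzero subprojection field cannot vanish at infinity.

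The one genuinely new point, and the reason both projections must be moved, is the choice of $\alpha_i,\beta_i$. In Lemma \ref{L:onetoone} one added a subprojection of $1-q_i$ to correct a single side, which forced $1-q_i$ to be full and properly infinite, i.e. $q_i$ halving. Here I only want to use subprojections of the full, properly infinite projections $p_i,q_i$ themselves, which Lemma \ref{L:subprojection} always supplies, so I can only \emph{lower} a codimension, never raise it. Because $K_0(B)$ is merely an ordered group, $D_i$ need be neither positive nor negative, so I cannot lower a single side by some representative of $D_i$. The fix is to lower both sides: since $K_0(B)^{+}$ generates $K_0(B)$ and hence is directed, there is $\beta_i\ge 0$ with $\beta_i\ge -D_i$, and then $\alpha_i=D_i+\beta_i\ge 0$ solves $\alpha_i-\beta_i=D_i$ with both classes positive. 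I expect this directedness argument to be the crux. The remaining verifications — that $p_i-a_i$ and $q_i-b_i$ are again full and properly infinite (a compact perturbation of a projection Murray--von Neumann equivalent to $1$ is again such), that the endpoint bookkeeping is exactly additive by Lemma \ref{L:properties}, and that the perturbed families are still local liftings of $\mathbf{p}$ and $\mathbf{q}$ — are routine.
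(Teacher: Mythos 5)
Your proposal is correct, and its skeleton is the paper's: the paper proves this lemma by the same left-to-right telescoping induction as Lemma \ref{L:onetoone}, pushing the running discrepancy $D_{i+1}=d_{i+1}+D_i$ forward and killing it at the last partition point (at the seam, in the circle case) via $\sum_i d_i=0$, and, like you, it drops the halving hypothesis by using only subprojections of the full, properly infinite projections themselves rather than of their complements. The one genuine difference is the correction move. The paper perturbs a \emph{single} side at each step, chosen by sign: ``if $d_{i+1}+\sum_{k\le i}d_k\ge 0$, make $p_{i+1}'\le p_{i+1}$ rather than $q_{i+1}'\ge q_{i+1}$'', with the opposite sign handled symmetrically. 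You perturb \emph{both} sides, decomposing $D_i=\alpha_i-\beta_i$ with $\alpha_i,\beta_i\in K_0(B)^{+}$ by directedness and subtracting subprojections of classes $\alpha_i$, $\beta_i$ from $p_i$, $q_i$. The trade-off: the paper's dichotomy presupposes each running correction is comparable with $0$, which is harmless in the setting this section actually treats --- for purely infinite simple $B$ every element of $K_0(B)$ is the class of a projection, and Lemma \ref{L:subprojection} is in fact stated for arbitrary $\alpha\in K_0(B)$, so sign never obstructs anything --- whereas your two-sided move needs only that $K_0(B)^{+}$ generates $K_0(B)$, so it survives in a merely partially ordered $K_0(B)$, i.e.\ under the conservative reading of Lemma \ref{L:subprojection} (positive classes only) that you adopted. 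In that sense yours is a more robust rendering of the same idea, at the cost of perturbing $\mathbf{p}$ even when a one-sided perturbation would do. One caveat: your closing parenthetical --- that a compact perturbation of a projection Murray--von Neumann equivalent to $1$ is again such --- is not needed for this lemma, whose conclusion concerns only matching codimensions, and it should not be waved through as a generality; where fullness and proper infiniteness of perturbed projections are actually used (Proposition \ref{P:equivalence}), they follow in this setting from Zhang's results on projections in $M(B)\setminus B$ for $\sigma$-unital purely infinite simple $C\sp*$-algebras, not from a general principle about compact perturbations.
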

   \begin{proof}
    The proof proceeds as above with one exception: If $d_{i+1}+\sum_{k=1}^{i} d_k \geq
    0$, we make $p_{i+1}' \leq p_i$ rather than making $q_{i+1}'\geq
    q_{i}$.
  \end{proof}
 An operator on $H_B$ is called a Fredholom operator when it is invertible modulo $\mathcal{K}(H_B)$ the ideal of compact operators. In fact, a generalized Atkinson theorem says that an opertor $F$ for which there exists a compact $K\in \mathcal{K}(H_B)$ such that $\Ker (F+K)$ and $\Ker (F+K)\sp*$ are finitely generated and $\Im F+K$ is closed is a Fredholm operator and vice virsa by \cite {Mi}. Thus we can define an index of a Fredholm operator in $K_0(B)$ as the diffence of two classes of finitely generated modules. Let us denote its index by $\Ind$. For more details, we refer the reader to \cite{We,Mi}.
\begin{prop}\label{P:equivalence}
Suppose $\mathbf{p}$ and $\mathbf{q}$ are given by projection valued functions $(p_0,p_1,\dots,p_n)$ and $(q_0,q_1,\dots,q_n)$, where both $p_i(x)$ and $q_i(x)$ are full and properly infinite projections for each $x$ in $ X_i$.
If $\sum_i k_i=\sum_i l_i$, then $\mathbf{p} \sim \mathbf{q}$.
\end{prop}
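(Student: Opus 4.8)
The statement asserts Murray--von Neumann equivalence of $\mathbf{p}$ and $\mathbf{q}$ in $\mathcal{C}(I)$, so the plan is to manufacture a partial isometry $\mathbf{v}\in\mathcal{C}(I)$ with $\mathbf{v}^{*}\mathbf{v}=\mathbf{p}$ and $\mathbf{v}\mathbf{v}^{*}=\mathbf{q}$, assembled from local trivializations over the intervals $X_{i}$ and glued across the partition points. Write $k_{i}=[p_{i}(x_{i}):p_{i-1}(x_{i})]$ and $l_{i}=[q_{i}(x_{i}):q_{i-1}(x_{i})]$. The first step is to upgrade the hypothesis $\sum_{i}k_{i}=\sum_{i}l_{i}$ to a termwise equality: Lemma \ref{L:samerank} provides perturbations of $(p_{0},\dots,p_{n})$ and $(q_{0},\dots,q_{n})$, within their respective cosets and hence representing the \emph{same} $\mathbf{p}$ and $\mathbf{q}$, for which $k_{i}=l_{i}$ for every $i$ (the cyclic relation being absorbed in the same way in the circle case). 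After relabelling I may therefore assume $[p_{i}(x_{i}):p_{i-1}(x_{i})]=[q_{i}(x_{i}):q_{i-1}(x_{i})]$ for all $i$.

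Since each $p_{i}(x)$ and $q_{i}(x)$ is full and properly infinite, Corollary \ref{C:triviality} trivializes the fields over each $X_{i}$: applying Lemma \ref{L:deform} to produce trivializations $u_{i},v_{i}$ of $p_{i},q_{i}$ onto the fixed halving projection $H$ and setting $w_{i}=v_{i}^{*}u_{i}$, I obtain strictly continuous partial isometry fields on $X_{i}$ with $w_{i}^{*}w_{i}=p_{i}$ and $w_{i}w_{i}^{*}=q_{i}$. The family $(w_{0},\dots,w_{n})$ represents an element of $\mathcal{C}(I)$ precisely when $w_{i}(x_{i})-w_{i-1}(x_{i})\in B$ at every partition point, and once this holds the relations $w_{i}^{*}w_{i}=p_{i}$, $w_{i}w_{i}^{*}=q_{i}$ descend to $\mathcal{C}(I)$ to give $\mathbf{v}^{*}\mathbf{v}=\mathbf{p}$ and $\mathbf{v}\mathbf{v}^{*}=\mathbf{q}$. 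So everything reduces to arranging the boundary compatibility $w_{i}(x_{i})-w_{i-1}(x_{i})\in B$.

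The heart of the argument is a local index computation together with its correction. At $x_{i}$ set
\[
Z_{i}=w_{i-1}(x_{i})\,w_{i}(x_{i})^{*}+(1-q_{i-1}(x_{i}))(1-q_{i}(x_{i})).
\]
One checks directly that $Z_{i}$ is a generalized Fredholm operator (unitary modulo $B$) and that $Z_{i}\,w_{i}(x_{i})\equiv w_{i-1}(x_{i})$ modulo $B$. Splitting $Z_{i}$ over $q_{i}(x_{i})H_{B}$ and its orthocomplement, the first summand is essentially unitary of index $k_{i}$ while the second has index $[1-q_{i}:1-q_{i-1}]=-[q_{i}:q_{i-1}]=-l_{i}$ by Lemma \ref{L:properties}; hence $\Ind Z_{i}=k_{i}-l_{i}=0$ after the reduction. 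The main obstacle is to turn this vanishing index into an honest partial isometry with the \emph{exact} required initial and final projections: using the generalized Atkinson theorem and Theorem \ref{T:BDF}, the fact that the $p$-side defect $k_{i}$ and the $q$-side defect $l_{i}$ coincide should let their contributions cancel through $w_{i-1}(x_{i})$, producing a partial isometry $w_{i}^{\sharp}$ with $w_{i}^{\sharp *}w_{i}^{\sharp}=p_{i}(x_{i})$, $w_{i}^{\sharp}w_{i}^{\sharp *}=q_{i}(x_{i})$, and $w_{i}^{\sharp}-w_{i-1}(x_{i})\in B$. (Concretely one writes $w_{i}^{\sharp}=t\,w_{i-1}(x_{i})\,s$ with partial isometries $s,t$ realizing the two essential codimensions, and the equality $k_{i}=l_{i}$ is exactly what forces $w_{i}^{\sharp}\equiv w_{i-1}(x_{i})$ modulo $B$.)

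Finally I would globalize the correction one partition point at a time. The partial isometries with prescribed initial projection $p_{i}(x)$ and final projection $q_{i}(x)$ form a torsor under the unitary group of the corner $q_{i}(x)M(B)q_{i}(x)\cong M(B)$, which is path connected; combined with the contractibility of the fibres $F_{H}^{-1}(p)$ from the proof of Lemma \ref{L:deform}, this allows me to homotope $w_{i}$ on a short collar $[x_{i},x_{i}+\epsilon]$ from the boundary value $w_{i}^{\sharp}$ back to the original $w_{i}$, holding $w_{i}^{*}w_{i}=p_{i}$ and $w_{i}w_{i}^{*}=q_{i}$ throughout and leaving $w_{i}$ untouched near $x_{i+1}$. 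Carrying this out inductively at $x_{1},\dots,x_{n}$ (and closing up at $x_{0}$ in the circle case) yields a family still representing $\mathbf{v}$ on each piece but now with $w_{i}(x_{i})-w_{i-1}(x_{i})\in B$ for every $i$, whence $(w_{0},\dots,w_{n})$ defines a partial isometry $\mathbf{v}\in\mathcal{C}(I)$ implementing $\mathbf{p}\sim\mathbf{q}$. I expect the genuinely delicate point to be this collar globalization---confining each correction to a neighbourhood of a single $x_{i}$ so that consecutive adjustments do not interfere and the exact final projections $q_{i}$ are preserved---rather than the index bookkeeping, which the reduction to $k_{i}=l_{i}$ has already made transparent.
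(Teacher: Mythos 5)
Your proposal is correct and follows the paper's own proof essentially step for step: reduce to termwise equality $k_i=l_i$ via Lemma \ref{L:samerank}, trivialize each $p_i$, $q_i$ over $X_i$ (Lemma \ref{L:deform}/Corollary \ref{C:triviality}) to get local partial isometry fields, show the generalized Fredholm index obstruction at each node vanishes, compactly perturb to a partial isometry with exact initial/final projections agreeing with the neighboring field modulo $B$, and splice the correction in on a collar using triviality of the field and path-connectedness of the unitary group of $M(B)$. The only cosmetic difference is that you compute the index of the full-space operator $Z_i$, whereas the paper computes the index of the compression $q_i(x_i)u_{i-1}(x_i)p_i(x_i)$ from $p_i(x_i)H_B$ to $q_i(x_i)H_B$; both bookkeepings yield $k_i-l_i=0$ and feed into the same perturbation argument.
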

\begin{proof}
 By Lemma \ref{L:samerank} and the assumption we may arrange that $k_i=l_i$ for each $i$.
    Since $p_i(x)$ and $q_i(x)$ are full, properly infinite projections for each $x$ in  $X_i$, there is a (double) strongly
    continuous function $u_i$ on each $X_i$ such that ${u_i}^{\ast}u_i=p_i,
    u_i{u_i}^{\ast}=q_i$  by Corollary \ref{C:triviality}.
    Note that $u_{i-1}(x)$ is a unitary from $p_{i-1}(x)H$ onto $q_{i-1}(x)H$ so that $\Ind(u_{i-1}(x_i))=0$.  Then $k_i=l_i$ implies that
    $$\Ind(q_i(x_i)u_{i-1}(x_i)p_i(x_i))=
   -l_i+\Ind(u_{i-1}(x_i))+k_i=0,$$
    where the first index is for maps from $p_i(x_i)H_B$ to $q_i(x_i)H_B$, and, for example, the index of $p_{i-1}(x_i)p_i(x_i)$ as a map from $p_i(x_i)H_B$ to $p_{i-1}(x_i)H_B$ is $k_i$.
    Also
    $$q_i(x_i)u_{i-1}(x_i)p_i(x_i)-u_{i-1}(x_i) \in B.$$  There is  a compact perturbation $v_i$ of $q_i(x_i)u_{i-1}(x_i)p_i(x_i)$ such that $v_i^*v_i=p_i(x_i)$, $v_iv_i^*=q_{i}(x_i)$, and $v_i-u_{i-1}(x_i)\in B$.

      By the triviality of the continuous
    field of Hilbert modules determined by $p_{i}$ and the path connectedness of the
    unitary group of $M(B)$ \cite{Mi}, there is a path $\{v(t) : t\in [x_i,x]\}$
    such that $ v(t)^*v(t)=v(t)v(t)^*=p_i(t)$, $v(x_i)={u_i(x_i)}^{*}v_i$,  and $v(x)=p_{i}(x)$  for some $x \in
    X_i$. Then we  let $w_i=u_iv$ on $[x_i,x]$ so that
    \begin{align*}
    &w_i(x_i)-u_{i-1}(x_i) = v_i
     - u_{i-1}(x_i) \in
    B, \\
    &w_{i}^*w_{i} =v^*{u_i}^*u_iv=v^*p_iv=p_i, \\
    & w_{i}w_{i}^* =u_ivv^*{u_i}^*=u_ip_i{u_i}^*=q_i.
     \end{align*}
     Finally, we define
     \[
     u_i'=
             \begin{cases}
         w_i, \quad \text{on} \quad[x_i,x], \\
     u_i, \quad \text{on} \quad[x,x_{i+1}].
             \end{cases}
     \]
     In the $(-\infty,\infty)$-case we do the above for $i=1,\dots,n$ and let $u'_0=u_0$. In the circle case we do it for $i=0,\dots,n$.
     \end{proof}
\begin{cor}\label{C:uequivalence}
Suppose $\mathbf{p}$ and $\mathbf{q}$ are given by projection valued functions $(p_0,p_1,\dots,p_n)$ and $(q_0,q_1,\dots,q_n)$, where both $p_i(x)$ and $q_i(x)$ are halving projections for each $x$ in $ X_i$.
If $\sum_i k_i=\sum_i l_i$, then $\mathbf{p} \sim_{u} \mathbf{q}$.
\end{cor}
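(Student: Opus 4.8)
The final statement is Corollary~\ref{C:uequivalence}: I must upgrade the equivalence $\mathbf{p} \sim \mathbf{q}$ from Proposition~\ref{P:equivalence} to a \emph{unitary} equivalence $\mathbf{p} \sim_u \mathbf{q}$ in the corona algebra $\mathcal{C}(I)$, under the stronger hypothesis that all the $p_i(x)$ and $q_i(x)$ are \emph{halving} projections (rather than merely full and properly infinite).

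**Plan.** My plan is to run the proof of Proposition~\ref{P:equivalence} verbatim up to its final output and then observe that, in the halving case, the partial isometries it produces assemble into a genuine unitary in the multiplier algebra whose image conjugates $\mathbf{p}$ to $\mathbf{q}$. Concretely, the proof of Proposition~\ref{P:equivalence} produces, after arranging $k_i = l_i$ via Lemma~\ref{L:samerank}, a family of sections $u_i'$ on each subinterval $X_i$ with ${u_i'}^*\, u_i' = p_i$ and $u_i'\, {u_i'}^* = q_i$, matched at the partition points so that $u_i'(x_i) - u_{i-1}'(x_i) \in B$. These patch to a single section $\mathbf{u} \in M(I)$ which is a partial isometry implementing $\mathbf{p} \sim \mathbf{q}$ at the level of the corona algebra; that is exactly the content of Proposition~\ref{P:equivalence}. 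The point of the halving hypothesis is that it gives room to complete each partial isometry to a unitary.

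**Key steps, in order.** First I would apply Proposition~\ref{P:equivalence} (legitimate, since halving implies full and properly infinite) to obtain the patched partial isometry $\mathbf{u}$ with $\mathbf{u}^*\mathbf{u} = \mathbf{p}$ and $\mathbf{u}\mathbf{u}^* = \mathbf{q}$ in $\mathcal{C}(I)$. Second, since each $p_i(x)$ is halving, both $p_i(x)$ and $1 - p_i(x)$ are Murray--von Neumann equivalent to the unit; likewise for $q_i(x)$. Thus on each $X_i$ the complementary fields determined by $1 - p_i$ and $1 - q_i$ are again fields of full, properly infinite projections, and by Corollary~\ref{C:triviality} each is trivial. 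Hence there is a strongly continuous family of partial isometries $\tilde{u}_i$ on $X_i$ with $\tilde{u}_i^*\tilde{u}_i = 1 - p_i$ and $\tilde{u}_i \tilde{u}_i^* = 1 - q_i$. Third, I would run the same path-connectedness and compact-perturbation matching argument used for the $u_i'$ in Proposition~\ref{P:equivalence}, applied now to $\tilde{u}_i$, so that the complementary partial isometries also agree modulo $B$ at each partition point $x_i$. Fourth, I set $U_i = u_i' + \tilde{u}_i$ on each $X_i$; then $U_i^*U_i = p_i + (1-p_i) = 1$ and $U_i U_i^* = q_i + (1-q_i) = 1$, so each $U_i$ is a strongly continuous \emph{unitary}-valued section on $X_i$, and the matching conditions give $U_i(x_i) - U_{i-1}(x_i) \in B$. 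These patch to a unitary $\mathbf{U} \in M(I)$ whose class $\pi(\mathbf{U})$ in $\mathcal{C}(I)$ satisfies $\pi(\mathbf{U})\, \mathbf{p}\, \pi(\mathbf{U})^* = \mathbf{q}$, giving $\mathbf{p} \sim_u \mathbf{q}$.

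**Main obstacle.** The delicate point is the simultaneous matching of \emph{both} the original partial isometries $u_i'$ and their complements $\tilde{u}_i$ at the partition points, so that the two pieces fit together into a single unitary that is continuous across each $x_i$ modulo $B$. In Proposition~\ref{P:equivalence} one only had to arrange $u_i'(x_i) - u_{i-1}'(x_i) \in B$; now I additionally need $\tilde{u}_i(x_i) - \tilde{u}_{i-1}(x_i) \in B$, and I must check these two adjustments are compatible (they can be carried out independently since $p_i$ and $1 - p_i$ live on orthogonal summands, and the index obstruction governing $\tilde u_i$ is $[1-p_i(x_i):1-q_i(x_i)] = -[p_i(x_i):q_i(x_i)] = 0$ by Lemma~\ref{L:properties}-(4), matching the vanishing already used for $u_i'$). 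Once both matchings hold on orthogonal fields, forming $U_i = u_i' + \tilde u_i$ is routine, and the path-connectedness of the unitary group of $M(B)$ together with Corollary~\ref{C:triviality} supplies the homotopies needed at each endpoint exactly as in the proof of Proposition~\ref{P:equivalence}.
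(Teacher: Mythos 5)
Your proposal is correct and is essentially the paper's own argument: the paper proves this corollary by applying Proposition \ref{P:equivalence} to $\mathbf{1-p}$ and $\mathbf{1-q}$ (legitimate because halving projections have full, properly infinite complements, and the complementary codimensions $-k_i$, $-l_i$ still satisfy the sum condition) and then combining the resulting Murray--von Neumann equivalence with $\mathbf{p}\sim\mathbf{q}$ into a unitary, which is exactly what your construction $U_i = u_i' + \tilde{u}_i$ carries out explicitly at the level of local liftings. One minor slip: the vanishing obstruction governing $\tilde{u}_i$ is $[1-p_i(x_i):1-p_{i-1}(x_i)]=-k_i$ versus $[1-q_i(x_i):1-q_{i-1}(x_i)]=-l_i$ (which agree once $k_i=l_i$ has been arranged via Lemma \ref{L:samerank}), not $[1-p_i(x_i):1-q_i(x_i)]$, which need not even be defined since $p_i(x_i)-q_i(x_i)$ need not lie in $B$.
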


\begin{cor}
Suppose $\mathbf{p}$ and $\mathbf{q}$ are given by projection valued functions $(p_0,p_1,\dots,p_n)$ and $(q_0,q_1,\dots,q_n)$.
If $\sum_i k_i=\sum_i l_i$, then $[\mathbf{p}] \sim [\mathbf{q}]$ in $K_0$.
\end{cor}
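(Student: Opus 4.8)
The plan is to deduce this from Proposition~\ref{P:equivalence} by stabilizing both projections with a fixed full, properly infinite projection: this operation leaves the essential codimensions untouched but turns every fiber into a full, properly infinite projection, which is exactly the hypothesis of that proposition. Concretely, let $\mathbf{h}$ be the constant projection-valued function with value the fixed full halving projection $H\in M(B)$; since $H$ is full and properly infinite, $\mathbf{h}$ is a genuine projection in $\mc{C}(I)$ with local representation $(H,\dots,H)$. Passing to $M_2(\mc{C}(I))=\mc{C}(M_2(I))$ and using the isomorphism $M_2(I)=C(X)\ot M_2(B)\cong C(X)\ot B=I$ afforded by the stability of $B$, I would consider the projections $\mathbf{p}\oplus\mathbf{h}$ and $\mathbf{q}\oplus\mathbf{h}$, locally represented by $(p_i\oplus H)_i$ and $(q_i\oplus H)_i$.

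First I would check that each fiber $f:=p_i(x)\oplus H$ is full and properly infinite. It dominates $f_0:=0\oplus H$, and $f_0\sim 0\oplus 1\sim 1_{M_2(M(B))}$ because $H\sim 1$ in $M(B)$ and $1_{M(B)}$, being the unit of the multiplier algebra of a stable algebra, is properly infinite. Hence $f\le 1_{M_2(M(B))}\sim f_0\le f$, so $f\precsim f_0$, and then $f\oplus f\precsim f_0\oplus f_0\precsim f_0\le f$ shows $f$ is properly infinite; it is clearly full, so $f\sim 1_{M_2(M(B))}$. The same applies to each $q_i(x)\oplus H$.

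Next I would compute the essential codimensions of the stabilized families. At a partition point $x_i$ one has $(p_i\oplus H)(x_i)-(p_{i-1}\oplus H)(x_i)=(p_i(x_i)-p_{i-1}(x_i))\oplus 0\in M_2(B)$, so $[\,(p_i\oplus H)(x_i):(p_{i-1}\oplus H)(x_i)\,]$ is defined, and the additivity of Lemma~\ref{L:properties}-(4) together with $[H:H]=0$ gives that it equals $[p_i(x_i):p_{i-1}(x_i)]=k_i$; likewise the stabilized codimensions of $\mathbf{q}$ are the $l_i$. Since $\sum_i k_i=\sum_i l_i$ and all fibers are now full and properly infinite, Proposition~\ref{P:equivalence} yields $\mathbf{p}\oplus\mathbf{h}\sim\mathbf{q}\oplus\mathbf{h}$, whence $[\mathbf{p}]+[\mathbf{h}]=[\mathbf{q}]+[\mathbf{h}]$ and, after cancelling $[\mathbf{h}]$ in the group $K_0(\mc{C}(I))$, the desired $[\mathbf{p}]=[\mathbf{q}]$. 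I expect the only genuinely delicate point to be the fiberwise argument of the second paragraph---verifying that adjoining a full, properly infinite projection again produces one---together with the routine bookkeeping that the identifications $M_2(\mc{C}(I))=\mc{C}(M_2(I))$ and $M_2(B)\cong B$ carry the local-representation picture over faithfully; everything else is immediate from the additivity of $[\,:\,]$ and from Proposition~\ref{P:equivalence}.
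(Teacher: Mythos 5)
Your proposal is correct and takes essentially the same route as the paper: the paper also stabilizes, replacing $\mathbf{p}$ and $\mathbf{q}$ by $\mathbf{p}\oplus\mathbf{1}\oplus\mathbf{0}$ and $\mathbf{q}\oplus\mathbf{1}\oplus\mathbf{0}$ (invoking Kasparov's absorption theorem to see that the fibers become full, properly infinite projections, with the $k_i$'s and $l_i$'s unchanged), then applies Proposition~\ref{P:equivalence} and cancels in $K_0$. Your only deviations---adjoining the constant halving projection $\mathbf{h}$ instead of $\mathbf{1}\oplus\mathbf{0}$, and checking proper infiniteness of the stabilized fibers by direct Cuntz-type comparison rather than by absorption---are cosmetic.
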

\begin{proof}
Since $p_i(x)$ and $q_i(x)$ are halving, we apply Proposition \ref{P:equivalence} to $\mathbf{1-p}$ and $\mathbf{1-q}$, and obtain that $\mathbf{1-p} \sim \mathbf{1-q}$. It follows that $\mathbf{p} \sim_u \mathbf{q}$.
\end{proof}
\begin{proof}
 We replace $\mathbf{p}$ with $\mathbf{p}\oplus \mathbf{1}\oplus \mathbf{0}$ and $\mathbf{q}$ with $\mathbf{q}\oplus \mathbf{1}\oplus \mathbf{0}$, still being equal in $K_0$. Also, note that $k_i$'s and $l_i$'s are not changed. Then by Kasparov's absorption theorem $p_x(H_B)\oplus H_B \simeq H_B$ and so is $q_x(H_B)$. Thus $p_x \oplus 1$ and $q_x\oplus 1$ are Murray-von Neumann equivalent $1$. This implies that $p_x\oplus 1$ and $q_x \oplus 1$ are full, properly infinite projections in $M(B)$. The conclusion follows from Proposition \ref{P:equivalence}.
\end{proof}

\section{Acknowledgements}
The author wishes to thank S. Zhang for pointing out his previous result \ref{R:properequivalence}-(ii) during a conference. He also wishes to thank P.W. Ng for confirming Lemma \ref{L:deform}.


\begin{thebibliography}{99}
\bibitem{B} L.G. Brown, \emph{Ext of certain free product $C\sp*$-algebras}, J. Operator Theory 6(1981), 135--141
\bibitem{Br} L.G. Brown, \emph{Stable isomorphism of hereditary subalgebras of $C\sp*$-algebras}, Pacific J. Math. 71(1977), no. 2 335--348
\bibitem{BDF} L. G. Brown, R. G. Douglas, P. A. Fillmore, \emph{Unitary equivalence modulo the compact operators and extensions of $C\sp{*}$-algebras}, Proc. Conf. Operator Theory, Lecture Notes in Math. 345, Springer, New York, 1973, 58--128
\bibitem{BL} L.G. Brown, H. Lee, \emph{Homotopy classification of projections in the corona algebra of a non-simple $C\sp*$-algebra},  Canad. J. Math. 64(4) 2012, 755--777
\bibitem{BP} L. G. Brown, G. K. Pedersen, \emph{$C\sp{*}$-algebras of real rank zero}, J. Func. Anal. 99(1991), 131--149
\bibitem{ca} J. W. Calkin, \emph{Tow sided ideals and congruences in the
    ring of bounded operators in Hilbert space} Ann. of Math 42 (1941), 839-873
\bibitem{cu} J. Cuntz, \emph{Generalized homomorphisms between $C\sp*$-algebras and $\KK$-theory}, Dynamics and Processes, Lecture Notes in Math. 1031, Springer, New York, 1983, 31--45
\bibitem{DE} M. Dadarlat, S. Eilers, \emph{Asymptotic unitary equivalence in $KK$-theory}, K-theory 23(2001), 305--322
\bibitem{DNNP} M. Dadarlat, G. Nagy, A. Nemethi, C. Pasnicu, \emph{Redunction of topological stable rank in inductive limits of $C\sp*$-algebras} Pacific J. Math. 153(1992) no. 2, 267--276
\bibitem{DixDua} J. Dixmier, A. Duady, \emph{Champs continus d'space hilbertiens et de $C\sp*$-algebres} Bull. de la. S.M.F., tome91 (1963), 227--284
\bibitem{Kas80} G. Kasparov, \emph{Hilbert $C\sp*$-modules: Theorems of Stinespring and Voiculescu}, J. Operator Theory 4(1980), 133--150
\bibitem{Kas81} G. Kasparov, \emph{The operator K-functor and extensions of $C\sp*$-algebras}, Izv. Akad. Nauk. SSSR Ser. Mat. 44(3) (1981), 571--636
\bibitem{Lee} H. Lee, \emph{Proper asymptotic unitary equivalence in KK-theory and projection lifting from the corona algebra}, J. Func. Anal. 260(2011), 135--145

\bibitem{Lin93} H. Lin, \emph{Exponential rank of $C\sp*$-algebras of real rank zero and the Brown-Pedersen conjectures}, J. Func. Anal. 114(1993), 1--11
\bibitem{Lin96} H. Lin, \emph{Approximation by normal elements with finite spectra in $C\sp*$-algebra of real rank zero}, Pacific J. Math. 173(1996),
397--411
\bibitem{KuNg} D. Kucerovsky, P.W. Ng, \emph{An abstract Pimsner-Popa-Voiculescu theorem}, J. Operator theory 55:1(2006), 169--183
\bibitem{Mich} E. Michael, \emph{Continuous selections. I, II, III} Ann. of Math 64(1956), 562--580
\bibitem{Mi} J. Mingo, \emph{K-theory and multipliers of stable $C\sp*$-algebras}, Trans. Amer. Math. Soc. 299:1(1987), 397--411
\bibitem{Ng} P.W. Ng, Private communication
\bibitem{RaeWi} I. Raeburn, D.P. Williams, \emph{Morita equivalence and Continuous Trace $C\sp*$-algebras} Mathematical Surveys and
Monographs, vol. 60, American Mathematical Society, Providence, RI, 1998. 
\bibitem{Th} K. Thomsen, \emph{On absorbing extensions}, Proc. Amer. Math. Soc. 129(2001), 1409--1417
\bibitem{Voi} D. Voiculescu, \emph{A non-commutative Weyl-von Neumann theorem}, Rev. Romaine Math. Pures Appl. 21(1)(1976), 97--113
\bibitem{Zh} S. Zhang, \emph{Certain $C\sp*$-algebras with real rank zero and their corona and multiplier algebras. Part I}, Pacific. J. Math. 155(1)(1992), 169--197
\bibitem{Zh2} S. Zhang, \emph{A Riesz decomposition property and ideal structure of multiplier algebras}, J. Operator Theory 24 (1990), 209--225
\bibitem{Zh3} S. Zhang, \emph{Factorizations of invertible operators and $K$-theory of $C\sp*$-algebras}, Bull. Amer. Math. Soc. 28(1993), no. 1, 75--83
\bibitem{We} N. Weggel-Olsen, \emph{K-theory and $C\sp*$-algebras-A friendly approach} Oxford(1993)
\end{thebibliography}
\end{document}